\definecolor{labelkey}{gray}{.8}
\definecolor{refkey}{gray}{.8}
\definecolor{darkred}{rgb}{0.9,0.1,0.1}
\definecolor{darkgreen}{rgb}{0,0.5,0}
\numberwithin{equation}{section}
\newcommand{\BB}{{\cal B}}
\newcommand{\FF}{{\cal F}}
\newcommand{\BR}{{\mathbb R}}
\newcommand{\si}{\sigma}
\newcommand{\om}{\omega}
\newcommand{\E}{{\mathbb E}}
\newcommand{\bbE}{{\mathbb E}}
\newcommand{\R}{{\mathbb R}}
\newcommand{\bbR}{{\mathbb R}}
\newcommand{\al}{\alpha}
\newcommand{\la}{\lambda}
\newcommand{\bn}{\bf n}
\newcommand{\essinf}{\mathop{\mathrm{ess\,inf}}}
\newcommand{\esssup}{\mathop{\mathrm{ess\,sup}}}
\newcommand{\cal}{\mathcal}
\newtheorem{theorem}{\bf Theorem}[section]
\newtheorem{proposition}[theorem]{\bf Proposition}
\newtheorem{lemma}[theorem]{\bf Lemma}%[subsection]
\newtheorem{corollary}[theorem]{\bf Corollary}
\theoremstyle{definition}
\newtheorem{definition}[theorem]{Definition}
\newtheorem{remark}[theorem]{Remark}
\numberwithin{equation}{section}
\begin{document}

\title[Hopf type lemmas]{Hopf type lemmas for subsolutions 
  of integro-differential equations. Supplement: discussion on the
  applications of the results}

\author{Tomasz Klimsiak  and  Tomasz Komorowski}

\address[Tomasz Klimsiak]{Institute of Mathematics, Polish Academy Of Sciences,
ul. \'{S}niadeckich 8,   00-656 Warsaw, Poland, \and 
\newline
Faculty of
Mathematics and Computer Science, Nicolaus Copernicus University,
Chopina 12/18, 87-100 Toruń, Poland, e-mail: {\tt tomas@mat.umk.pl}}

\address[Tomasz Komorowski]{Institute of Mathematics, Polish Academy
  Of Sciences, ul. \'Sniadeckich 8, 00-636 Warsaw, Poland, e-mail: {\tt komorow@hektor.umcs.lublin.pl}}
\footnotetext{{\em Mathematics Subject Classification:}
Primary  35B50; Secondary 35J15 , 35J08}

\footnotetext{{\em Keywords:} Integro-differential elliptic equation,
  weak subsolution, 
  maximum principle, the Hopf lemma}
\date{}
\maketitle
\begin{abstract}
 In the paper we prove a lower bound  for
  subsolutions of the integro-differential equation:   $-Au+cu=0$ in a domain $D$.
It states that there exists a Borel function
  $\psi$,  strictly positive on $D$,  depending   only on the
 coefficients of the operator $A$,  $c$ and $D$
such that for any subsolution $u(\cdot)$, that satisfies $\sup_{y\in
  D_S}u(y)\ge0$,  one can find a constant
$a>0$ (that in general depends on $u$), for which
$\sup_{y\in D_S}u(y)-u(x)\ge a\psi(x)$, $x\in
D$. 
The bound  is valid for 
a wide class of L\'evy type integro-differential operators $A$, 
non-negative, bounded and measurable function $c$ and  a quite general domain $D\subset \BR^d$.
Here $D_S$ is a certain set containing the closure of $D$ and  determined by the support of the Levy jump measure associated with
$A$.  The main assumptions made about $A$ are that: there exists a strong Markov
solution to the martingale problem  associated with  the operator and its resolvent satisfies some
minorization condition.
  This type of a result we
 call  the {\em generalized Hopf lemma}.

For certain classes of operators  
the constant $a$  could be taken to be equal to $ \sup_{y\in
  D_S}u(y)$. We refer to such a result as a {\em quantitative version} of the
 Hopf lemma.  In some cases     a non-negative   eigenfunction
corresponding to the operator in $D$ can be admitted as the function $\psi$ appearing in 
the lower bound.  In particular, this occurs when the transition
probability semigroup associated with $A$ is    ultracontractive.

\end{abstract}

% \footnotetext{{\em Mathematics Subject Classification:}
% Primary  35R06; Secondary 35R05, 45K05, 47G20, 35D99}

% \footnotetext{{\em Keywords:} Renormalized solution, Dirichlet
% form,  measure data, Markov semigroup, Markov process, Green
% function }

%\footnotetext{This work was supported by Polish National Science Centre
%(grant no. 2016/23/B/ST1/01543).}

\section{Introduction}
\label{sec1}
Let $m$ be a $\sigma$-finite Borel measure, let $D\subset \bbR^d$ be a
domain, i.e.  an open and connected set,
and let  $A$ be 
an  integro-differential operator of the form
 \begin{equation}
\label{A}
\begin{split}
&Au(x)=\frac12{\rm Tr}({\bf{ Q}}(x)\nabla^2u(x))+b(x)\cdot\nabla u(x)
\\&\qquad \qquad+\int_{\BR^d}\left(u(x+y)-u(x)-\frac{y\cdot \nabla
    u(x)}{1+|y|^2}\right)N(x,dy),\,\quad x\in D
\end{split}
\end{equation}
for any $u\in C^2(D)\cap C_b(\BR^d)$. Consider the following condition.

\vspace*{0.25cm}
\begin{center}
\begin{minipage}[c][2,75cm][t]{0,85\textwidth}
\textbf{Minorization condition:} there exist $\al\ge0$, a Borel measurable function
$\psi_D^\al:D\to[0,+\infty)$ and $\nu^\alpha_D$ - a
$\si$-finite  Borel measure  on $D$ - such that 
\begin{equation*}
%\label{RDa.int}
R^D_\alpha f(x)\ge \psi_D^\al(x) \int_D f d\nu_D^\alpha,\quad x\in
D,\, f\in B^+_b(D).
\end{equation*}
Given particular $\al$, $\psi_D^\al$ and  $\nu^\alpha_D$ we shall
refer to the above hypothesis as condition $M(\alpha,\psi^\alpha_D,\nu^\alpha_D)$.
\end{minipage}
\end{center}
Here $R^D_\alpha$ is the resolvent   of the operator $A$ on $D$,
see \eqref{RDt} below for the precise definition, and $B^+_b(D)$ is
the space of all non-negative, bounded, Borel measurable functions.

The main result of the present paper, see Theorem  \ref{th3.hl.nuc1},  states that 
if the minorization condition $M(\alpha,\psi^\alpha_D,\nu^\alpha_D)$ holds 
with    strictly positive $\psi_D^\alpha$, the   measure
$m$   is absolutely continuous with respect to $\nu_D^\alpha$ and some additional,
  rather natural,
assumptions about   $A$ (stated below) are satisfied, then 
there exists a strictly positive function $\psi_D$ on $D$ such that 
for any {\em   subsolution} $u$ to
\begin{equation} 
\label{eq3.1ab}
(-A+c)v(x)=0,\quad x\in D
\end{equation} 
satisfying $\bar u_{D_S}:=\sup_{D_S}u\ge 0$ there exists $a>0$ for which
\begin{equation}
\label{basic-bound}
\bar u_{D_S}-u(x)\ge a\psi_D(x),\quad x\in D.
\end{equation}
Moreover, if $\alpha\ge \sup_D c$, then we may take $\psi_D=\psi^\alpha_D$.
The set $D_S$ appearing above is the extended closure of $D$. In case
$A$ is local it is the usual closure of $D$.  For non-local
operators it contains the closure of the domain and  is determined by the L\'{e}vy
jump measure,  see \eqref{exbexc} below for the precise definition.

{Concerning the hypotheses on the operator $A$, we suppose that:
\begin{itemize}
\item[A1)] the entries of the symmetric matrix valued function ${\bf
 Q}(x)=[q_{i,j}(x)]_{i,j=1}^{d}$ and coordinates of the vector valued
$b(x)=(b_1(x),\ldots,b_d(x))$, $x\in \bbR^d$ are bounded and
Borel measurable.   In addition, 
${\bf
 Q}(x) $ is non-negative definite, i.e.
\begin{equation*}
%\label{la-K1}
\sum_{i,j=1}^dq_{i,j}(x)\xi_i\xi_j\ge 0,\quad x \in\BR^d,\, \xi=(\xi_1,\ldots,\xi_d)\in\BR^d,
\end{equation*}
%\end{itemize}
\item[A2)]  the {\em
  (L\'evy)
kernel} $N(x,dy)$ is a $\si$-finite Borel measure on
  $\R^d\setminus\{0\}$ for each $x\in \bbR^d$ and 
%for every $B\in B(\BR^d\setminus\{0\})$ the function
\begin{equation*}
%\label{Nx1}
N_*:=\sup_{x\in \bbR^d}\int_{\bbR^d} \min\{1,|y|^2\} N(x,dy)<+\infty.
\end{equation*}
\end{itemize}
% Such operators are sometimes called of the  Waldenfels type,
% see e.g. \cite[Chapter 10]{Taira2}. 
We assume also that:
\begin{itemize}
\item[A3)] 
$c$ is non-negative, bounded and Borel measurable on $D$.
\end{itemize}}
Besides the  
hypotheses   listed above, we shall also assume 
the existence of  a strong Markov solution of the martingale problem
associated with $A$, see hypothesis A4) formulated in Section \ref{SMM}.    

 When $N$ is non-trivial, then $A$ is non-local, i.e. the
 evaluation of $Au(x)$
no longer depends only on the values of $u$ in an arbitrarily small neighborhood of $x$.
To describe this dependence define the {\em range of non-locality} of $A$ at $x$ as:
\[
\mathcal S_x:=\big[z\in\bbR^d:\, z=x+y,\,\mbox{where } y\in \mbox{supp}\, N(x,\cdot)\big].
\]
Here $\mbox{supp}\, N(x,\cdot)$ denotes the support  of the measure,
i.e. the smallest closed set whose complement is of null measure. The
value of  $Au(x)$ depends therefore on the values of $u$ in an arbitrarily small
neighborhood of $x$ and  on its values   in $\mathcal S_x$.
Furthermore, for an open set $D\subset\BR^d$ we define  the range of
  non-locality of $A$ over $D$ as
\begin{equation}
\label{cSD}
\mathcal S(D)=\bigcup_{x\in D}\mathcal S_x.
\end{equation}
For example  if $N(x,dy)\equiv 0$, i.e. $A$ is  local, then $\mathcal
S(D)=\emptyset$. 
On the other hand, in the case of the fractional power of the free
Laplacian    $A=\Delta^{s/2}:=-(-\Delta)^{s/2}$, with
$s\in(0,2)$, see \eqref{Sal} below, we have $\mathcal S(D)=\BR^d$.

It is well known that in the case of  a  local operator $A$ and
a sufficiently regular $D$    the
solution of \eqref{eq3.1ab} (recall that $c\ge0$)  is uniquely
determined by its values on the boundary of the domain $\partial D$.
However, this,  in general, is  no longer  true for non-local operators, %as then bounded solutions  of \eqref{eq3.1ab} are  determined  by their values on the  complement of $D$, 
see e.g. \cite{dynkin}.
For this
reason, we introduce the {\em extended boundary}
and {\em extended closure} 
%Thus, we add to the boundary $\partial D$  an exterior set $\mathcal S(D)\setminus D$.
\begin{equation}
\label{exbexc}
\partial_{S}D:=\partial D\cup(\mathcal S(D)\setminus D),\quad D_S:=\partial_{S}D\cup D,
\end{equation}
respectively. The above notions appear  naturally  when we want to formulate a counterpart of the weak maximum principle for operators of the form \eqref{A}. Namely, if $u$ is
a  subsolution of \eqref{eq3.1ab} in $D$, then 
 \begin{equation}
\label{032804-20}
\bar u_D\le   \bar u^+_{\partial_{S}D}
\end{equation}
(see Proposition \ref{prop012904-20}). Here and in what follows, for a given subset $B\subset \BR^d$, we denote $\bar u_B:=
\sup_{y\in B}u(y)$ and $u^+:=\max\{u,0\}$.

% The supremum  of subsolutions to \eqref{eq3.1ab}, which we are
% interested in the paper, is taken over ${\rm cl}\,D\cup\mathcal
% 

Concerning the   subsolution to \eqref{eq3.1ab}, we consider a  quite
general notion of  the   {\em
  weak  subsolution}, see Definition \ref{df3.1g} below. It is
probabilistically motivated and
 includes, besides the {\em classical} subsolutions also {\em   viscosity} and  {\em Sobolev
  subsolutions}, see Remark \ref{svc.w1}. We denote by $\mathcal U_c$
the class of all weak subsolutions
  to \eqref{eq3.1ab} and by $\mathcal U^+_c$ the set of $u\in\mathcal
  U_c$  that satisfy $\bar u_{D_S}\ge 0$.

In the
local  case, i.e.  when   $N\equiv0$, uniform ellipticity condition
holds on compact sets and the boundary is sufficiently
regular \eqref{basic-bound} is valid with  
$$
\psi_D(x)=\delta_D(x):= {\rm dist}(x,\partial D)
$$ - the distance of $x$ from
$\partial D$, see e.g. \cite[Section 2.3]{weinberger}. This is the
contents of the   Hopf lemma valid for second order ellitpic
operators, see also  \cite{GS} for the result for  a
fractional laplacian. 
 An estimate of the form \eqref{basic-bound} can  be therefore considered 
as   a
generalization  of the classical Hopf lemma
and we shall refer to $\psi_D$ appearing there as a {\em bottom function}.  
In the present paper, we take a comprehensive look at  the validity of \eqref{basic-bound} from a   probabilistic viewpoint and 
propose a unified approach to the problem for a wide class of
integro-differential operators.

The main result of the paper,   sketched in the foregoing, is
proved in Section \ref{NS-hopf}, see Theorem \ref{th3.hl.nuc1}. Some
complementary   results, concerning the relationships between the
generalized Hopf lemma, irreducibility, minorization condition and the strong maximum principle,  are shown in Section \ref{mis.1}. The
diagram presented below illustrates the relations     between them. The symbol ``$\psi\succ 0$ in $D$''
means $\psi(x)>0$ for all $x\in D$.
\begin{figure}[ht]

\begin{center}
\begin{tikzpicture}[scale=0.8]
\draw[thick]  (-8.7,0) rectangle (-4.2,2);
\draw[thick] (-0.7,0) rectangle (3.7,2);
\draw[thick]  (-8.7,-5) rectangle (-4.2,-3);
\draw[thick] (-0.7,-5) rectangle (3.7,-3);
\draw[thick] (-0.7,1) -- (3.7,1);
\node [below] at (-6.5,2.6){{\small Irreducibility}};
\node [below] at (-6.5,2){\mbox{\tiny{ For any $f\in B_b^+(D)$ }}};
\node [below] at (-6.5,1.6){ \mbox{\tiny{such that $\int_Df\,dm>0$ we have}}};
\node [below] at (-6.5,1){\tiny{$R^D_1f\succ 0$\,\,\, in $D.$}};
\node [below] at (-6.5,-3.2){\mbox{\tiny{ If $u\in\mathcal U_c^+$ and  there exists }}};
\node [below] at (-6.5,-3.7){\mbox{\tiny{ $x_0\in D$ such that $\bar u_{D_S}=u(x_0),$}}};
\node [below] at (-6.5,-4.2){\mbox{\tiny{ then $u\equiv \bar u_{D_S}\,\,\, m$-a.e.}}};
\node [below] at (-6.5,-5){{\small Strong Maximum Principle}};
\node [above] at (1.5,2){{\small Minorization condition}};
\node [below] at (1.5,2){\mbox{\tiny{ $\forall{\alpha\ge 0}\,\, M(\alpha,\psi_\alpha,\nu_\alpha)$ holds}}};
\node [below] at (1.5,1.5){\mbox{\tiny{ {\rm with}\,\, $\psi_\alpha\succ0\,\,\mbox{and}\,\, m\ll\nu_\alpha.$}}};
\node [below] at (1.5,1){\mbox{\tiny{ $\exists{\alpha\ge 0}\,\, M(\alpha,\psi_\alpha,\nu_\alpha)$ holds}}};
\node [below] at (1.5,0.5){\mbox{\tiny{ {\rm with}\,\, $\psi_\alpha\succ0\,\,\mbox{and}\,\,  m\ll\nu_\alpha.$}}};
\node [above] at (1.5,-5.6) {\mbox{\small{The Hopf lemma}}};
\node [below] at (1.5,-3) {\mbox{\tiny{ $\exists\,\, \psi_D\succ0$ such that  for any}}};
\node [below] at (1.5,-3.5) {\mbox{\tiny{  $u\in\mathcal U^+_c$ $\exists\,\, a>0$ for which}}};
\node [below] at (1.5,-4) {\mbox{\tiny{ $\bar u_{D_S}-u(x)\ge a\psi_D(x),\,\, x\in D.$}}};
\draw[->] (-4,1.7) -- (-1,1.7) node [above, text centered, midway]{\small{Theorem \ref{th3.hl.nuc1a}}}
node [below,midway]{{\small ($m$ is excessive)}};
\draw[->] (-1,0.3) -- (-4,0.3) node [above, midway]{\small{Remark \ref{rem.apx3}}};
\draw[->] (1.5,-0.1) -- (1.5,-2.9) node [above, rotate=90, text centered, midway]{{\small Theorem \ref{th3.hl.nuc1}}};
\draw[->] (-0.8,-2.9) -- (-4.2,-0.1) node[above, rotate=-40,midway]{{\small Theorem \ref{prop.apx1}}} node[below, rotate=-40,midway]{{\small ($m$ is excessive)}};
\draw[->] (-8,-2.9) -- (-8,-0.1) node [above, rotate=90, text centered, midway]{{\small Theorem \ref{prop012904-20v2}}} node [below, rotate=90, text centered, midway]{{\small  ($m$\,\,{\rm is excessive})}};
\draw[<-] (-5,-2.9) -- (-5,-0.1) node [above, rotate=90, text centered, midway]{{\small Theorem \ref{prop012904-20v2}}} ;
\end{tikzpicture}
\caption{} 
\label{fig1}
\end{center}
\end{figure}

In general, the constant $a$ appearing in \eqref{basic-bound} may depend on a
subsolution   in some implicit and complicated way and the  
bottom function $\psi_D$ is not given explicitly.
In Sections \ref{disc2}--\ref{sec3-2810-20} we formulate several results 
that yield additional information about  these objects.
From Proposition \ref{prop5.1} it follows that if the transition
probability semigorup $(P^D_t)$, coresponding to the Markov solution
of the martingale problem, is intrinsically  ultracontractive, then 
for any $\alpha>0$, the minorization condition $M(\alpha,\psi^\alpha_D,\nu^\alpha_D)$ holds with
\begin{equation}
\label{iul.apx1}
\psi^\alpha_D=\varphi_D,\quad \nu^\alpha_D(dx)=c_\alpha\hat \varphi_D(x)\,m(dx),
\end{equation}
where $c_\alpha>0$ is some constant, depending on $\alpha$, and $\varphi_D$ and $\hat\varphi_D$  are the principal
eigenfunctions for the semigroup  and its dual,  
respectively. Therefore, in particular, the Hopf lemma, as formulated
in \eqref{basic-bound}, is valid in this case with $\psi_D=\varphi_D$.
In fact, as it  can be seen from Theorem \ref{thm010211-20}, the
aforementioned bound holds
if and only if the $\varphi_D$-Doob transform of the canonical process
  has a uniformly
ergodic resolvent.
In Theorem \ref{thm010807-20}, we formulate sufficient conditions under which the
constant $a$ appearing in \eqref{basic-bound}  can be written in the form
\begin{equation}
\label{aa}
a=a'\bar u_{D_S}+\int_D(Au-cu)\,d\nu_{D}^\alpha,
\end{equation}
for some constant $a'>0$ independent of $u$. In particular, this, combined with \eqref{iul.apx1} shows that if $(P^D_t)$ is intrinsically  ultracontractive,
then there exists $a'>0$  such that
\begin{equation*}
%\label{012804-20zapx45v1}
\bar u_{D_S}-u(x)\ge a' \varphi_D(x)\left(\bar u_{D_S}+\int_D\hat\varphi_D(y)(A-c)u(y)\,dy\right)
\end{equation*}
for any subsolution $u$ to \eqref{eq3.1ab} satisfying  $\bar u_{D_S}\ge0$.
A particular example when \eqref{basic-bound} holds, 
with $\psi_D(x)=\varphi_D(x)=\delta_D^{s/2}(x)$ and the constant  $a$ given by \eqref{aa}, is  
furnished by the 
fractional laplacian 
  $\Delta^{s/2}$, in case  $D$ is bounded and of $C^{1,1}$
class, see Remark \ref{rm5.4}.

%\textcolor{red}{\bf DOTAD}

% Finally, we show  that the  $\varphi_D$-Hopf and
% its quantitative   counterpart hold, if the transition
% semigroup  
% associated with $A$ and the domain $D$, via the strong Markovian solution
% of the martingale problem,  is intrinsically ultracontractive, see Corollary
% \ref{cor5.1}. 

%and we focus in the paper on the behavior of subsolutions to \eqref{eq3.1ab} around such points.
Concerning the existing literature,   our  results are related to the
current research dealing with the  boundary regularity of solutions to
integro-differential equations, see
e.g. \cite{BL,Bogdan,BFV,CS1,CRS,GS,RS,RS1} and references
therein. Most of the  existing results
deal with the fractional Laplacian $\Delta^{s/2}$, i.e. the operator
of the form \eqref{A} with $\mathbf Q\equiv 0,\ b\equiv 0$ and
\begin{equation}
\label{eq.fjm}
N(x,dy)=c |y|^{-d-s}\,dy,\quad x,y\in\BR^d
\end{equation}
for some $s\in (0,2)$ and  $c>0$.
Although the equations with the fractional Laplacian are fundamental
and  their analysis is important in understanding the nature of non-local equations,
this class of operators is not sufficient for many applications. One
should keep in mind that even a  small  modification of the non-local
part of an operator, especially when its local part is degenerate, may
profoundly change the  regularity properties of solutions.
For example, it is  known  that  for positive bounded and continuous
$\Delta^{s/2}$-subharmonic  functions, with  $s\in(0,2)$,  on bounded smooth domains $D$ - i.e. satisfying  $\Delta^{s/2} u(x)\le 0,$ $x\in D$ -
the difference $\bar u_{\BR^d}-u(x)$ behaves near the boundary  like
$\delta_D^{s/2}(x)$, if $\bar u_{\BR^d}=u(\hat x)$ for some $\hat
x\in\partial D$. However,   
if  an innocent looking   (as one might be tempted to think) modification of $\Delta^{s/2}$
is made
by replacing constant $c $ in \eqref{eq.fjm} by a function $d(y)$
which is bounded from below and   above by  positive constants,
then   the differences $\bar u_{\BR^d}-u(x)$   for   the subharmonic functions
corresponding to the respective operators, in general,
are  not comparable with each other  near the boundary, see
\cite[Section 2.3]{RS1}. As we have already mentioned,  in
the present paper   we  strive for estimates of
the form \eqref{basic-bound} that are not related to some special features (such as
e.g. scaling properties of the L\'evy kernel) of the non-local
operator under the consideration.

The maximum principles both weak and strong, together with
the Hopf lemma in the case when the local part of $A$ satisfies
uniform ellipticity condition on compact sets can be found   e.g. in  \cite[Section I.4]{BCP}, \cite[Section 10.2]{Taira2} and
\cite[Appendix C]{Taira6}. This is also the topic of our paper
\cite{kk01} and an interested reader can find some additional
references therein.
Finally, we mention also that there exists a substantial literature
concerning the Hopf lemma and maximum principles  on
non-local partial differential equations with $\mathcal S(D)\subset
{\rm cl}\,D$ (then $D_S={\rm cl}\,D$ and $\partial_S D=\partial D$ ), see e.g. \cite[Appendix C]{Taira2} and the references therein.

%In the rest of the introduction we assume that $u(\hat x)=[u]_{{\rm cl}\,D\cup \mathcal S(D)}\ge 0$ for some $\hat x\in \partial D$.

Concerning the organization of the present paper, in Section
\ref{sec2} we
recall
%  introduce  the  notion of a weak subsolution (supersolution and solution) of
% \eqref{eq3.1ab}. To formulate it, we assume 
the notion of a strong Markov solution of the martingale problem
associated with the operator $A$, see the hypothesis A4) formulated in
Section \ref{SMM}.  Its existence (uniqueness is not required) is
assumed throughout the paper.
 % It is given by a family of path measures $(P_x)_{x\in\bbR^d}$ on the
% Skorokhod space ${\cal D}$, such that the
% functionals \eqref{Mtf} are martingales, see Definition \ref{df020311-20} for the
% precise formulation. The canonical process
% $X_t(\om):=\om(t)$, $t\ge0$,  $\om\in {\cal D}$ has the strong Markov
% property under each measure $P_x$, $x\in\bbR^d$. 
% We can   define the transition semigroup $(P_t^D)_{t\ge0}$ and  resolvent family
% $(R^D_{\al})_{\al\ge 0}$ corresponding to the Markov solution of the martingale
% problem, see \eqref{PDt} and \eqref{RDt}, respectively.
Under fairly mild assumptions on the coefficients of $A$, see Theorem \ref{thm2.3} (which
summarizes some of the existing results on the subject) one can guarantee the existence of
 such a   solution. 
In addition, throughout the   paper
 we shall suppose  that the canonical process
exits $D$ a.s. starting from any point of the domain, see condition ET) expressed by formula
\eqref{012603-20}. {Some useful conditions guaranteeing   ET) are presented in Section \ref{rm3.14}.}
 {This hypothesis holds e.g. if we assume 
that $D$ is bounded and 
the differential part of the operator is uniformly elliptic in $D$ (the
latter however is not assumed  in the present paper).} Finally, using   a strong Markov solution of the martingale
problem  we define  in Section \ref{sec2.5} the notion of a weak
subsolution (supersolution and solution) of
\eqref{eq3.1ab}. 

In Sections \ref{NS-hopf} and \ref{mis.1} we formulate and prove the main results of the
  paper.  In Sections \ref{disc2} and \ref{disc} various relationships
between  estimate \eqref{basic-bound}, the ultracontractivity  and
some   ergodic properties of   the
canonical process   are discussed.  
 Section \ref{sec3-2810-20} is devoted to the
formulation and the proof of the quantitative version of the Hopf
lemma. Finally,
in Section \ref{sec6zz} we present some auxiliaries concerning
  the validity 
of various hypotheses made throughout the paper: such as e.g.  the existence of a strong Markov solution of the
martingale problem and the existence
of the principal eigenfunction corresponding to $A$.

\section{Preliminaries}
\label{sec2}

\subsection{Basic notation}
% Throughout the paper we use the following notation. For $ (E,{\rm d})$
% a locally compact separable metric space, denote by

Suppose that $B$ is an arbitrary set. For functions $f,g:B\to[0,+\infty)$
we write $f\preceq g$ on $B$ if there exists number $C>0$, i.e., \textit{constant}, such that
$$
f(x)\le Cg(x),\quad x\in B. 
$$
Furthermore, we write $f\sim g$ if $f\preceq g$ and $g\preceq f$. 
Throughout the paper    we denote 
\begin{equation*}
%\label{bfB}
\bar f_B:=\sup_{x\in B}f(x)\quad\mbox{and}\quad {\underline f}_B:=\inf_{x\in B}f(x).
\end{equation*}

For a metric space $E$ we denote by  $\BB(E)$ its Borel
$\si$-algebra.
Given a subset $D\subset E$ we let $D^c:= E\setminus D$ be its
complement and ${\rm cl}\, D$
be its closure. Let
$B_b(E)$ ($B^+_b(E)$) be the space of all (non-negative) bounded Borel
measurable functions and let $C_b(E)$ ($C_c(E)$) be the space of all
bounded  continuous (compactly supported) functions on $ E$.
Furthermore by ${\cal M}(E)$ we denote the set of all Borel positive measures
on $E$. 
Suppose that $\mu,\nu\in {\cal M}(E)$. We say that $\mu$ dominates
$\nu$ ($\nu$ is absolutely continuous with respect $\mu$) and write $\nu\ll\mu$ if all null sets
for $\mu$ are also null for $\nu$.
The measures are equivalent and write $\mu\sim\nu$ if
$\mu\ll\nu$ and $\nu\ll\mu$.

Given a point $x\in E$ and $r>0$ we 
let $B(x,r)$ be the open ball of radius $r$ centered at $x$ and 
$\bar B(x,r)$ its closure.
As it is customary
for a given function $f:E\to\R$ we denote  $\|f\|_\infty=\sup_{x\in
  E}|f(x)|$. 
For   $\nu$  -
  a  signed Borel measure on $E$ - we define its total variation
  norm   as
\begin{equation}
\label{TV}
\|\nu\|_{\rm TV}:=\sup_{\|f\|_{\infty}\le 1}\left|\int_E fd\nu\right|.
\end{equation}

If $D\subset \R^d$ is open we 
let $C^k(D)$, $k\ge1$ be the class of $k$-times
continuously differentiable functions in $D$. By $C_0(D)$ we denote
the subset of  $C(D)$ that consists of functions extending
continuously to ${\rm cl}\, D$ by letting $f(x)\equiv0$, $x\in\partial D$ -
the boundary of $D$.

Throughout the paper, if it is not stated otherwise,  $m$ shall denote  any non-trivial positive $\sigma$-finite Borel measure on $\BR^d$.
We let $\ell_d$ be the $d$-dimensional Lebesgue measure on
$\bbR^d$ and $dx$ be   the respective volume element.

For $p\in[1,+\infty)$ we denote by  $L^p(D)$  ($L^p_{\rm loc}(D)$) the space of functions
that are integrable with their $p$-th power on 
$D$ (any compact subset of $D$) with respect to $\ell_d$. As usual
$L^\infty(D)$ is the space of functions with a finite essential
supremum norm. {By  $W^{k,p}(D)$
($W^{k,p}_{\rm loc}(D)$) we denote the Sobolev space of
functions whose $k$ generalized derivatives belong to $L^p(D)$
($L^p_{\rm loc}(D)$). Finally, let  $W^{k,p}_{0}(D)$ be the closure of
$C_0^\infty(D)$ in the $W^{k,p}(D)$ - norm.}

\subsection{Second-order, elliptic integro-differential operators}

{
{Suppose that $D$ is an open set and $A$,  defined by \eqref{A},
  satisfies A1) and A2).
We let
\begin{equation}
\label{MA}
M_A:=\sum_{i,j=1}^d\|q_{i,j}\|_\infty+\sum_{i=1}^d\|b_{i}\|_\infty+N_*<+\infty.
\end{equation}
Obviously in order to give meaning to $Au(x)$ for $x\in D$ it suffices only to
assume that $u\in C^2(D)\cap C_b(\bbR^d)$. In fact, we can define
$Au$ as an element of $L^p(D)$ even if  $u\in W^{2,p}_{\rm loc}(D)\cap
C_b(\bbR^d)$ when $p>d$. 
This is possible due to a well known estimate, see \cite[Lemme 1,
p. 361]{bony}:  for any $p>d$ there
exists $C>0$ such that
$$
\|U[u]\|_{L^p(\bbR^d)}\le C\|\nabla^2 u\|_{L^p(\bbR^d)},\quad u\in W^{2,p}(\bbR^d),
$$
where
$$
U[u](x):=\sup_{|y|>0}|y|^{-2}\left|u(x+y)-u(x)-\sum_{i=1}^dy_i\partial_{x_i}u(x)\right|.
$$}

}
\bigskip
Given a bounded and measurable function $c:D\to\bbR$ we let
\begin{equation}
\label{c-c}
\bar c_D=\sup_{x\in D} c(x),\quad \underline{c}_D:=\inf_{x\in D}
c(x)\quad\mbox{and}\quad \langle c\rangle_{D,m}:=\int_Dc(x)\,m(dx).
\end{equation}
Concerning hypotheses made about $c$.
In some of  the results we shall require a stronger
condition  than A3). Namely, we suppose either
\begin{itemize}
\item[A3')] 
$c\in { B}_b^+(D)$ and 
$
\langle c\rangle_{D,m}>0,
$ (i.e. $c\not\equiv 0$, $m$-a.e.)
\end{itemize} 
or   an even  stronger assumption
\begin{itemize}
\item[A3'')] 
$c\in { B}_b^+(D)$ and 
$
\underline{c}_D>0.
$
\end{itemize}

\label{sec4}

 \subsection{Strong Markovian solution to a martingale problem associated with operator $A$} 

\label{SMM}

Suppose that $\partial\not\in\bbR^d$. 
Consider the space
$\bar\bbR^d:=\{\partial\}\cup\bbR^d$ with the topology of the one
point 
compactification of $\bbR^d$ by $\partial$. Any function
$f:\bbR^d\to\bbR$ can be extended to $\bar\bbR^d$ by letting $f(\partial)=0$. Let $\bar{\cal D}$ be the
space consisting of all 
functions $\om: [0,+\infty)\to\bar\BR^d$, that are right continuous and
possess the left limits for all $t\ge0$ (c\'adl\'ags), equipped with
the Skorochod topology, see e.g. Section 12 of \cite{bil}. 
Define the canonical process $X_t(\om):=\om(t)$, $\om \in \bar{\cal D}$
and its natural filtration $({\cal F}_t)$, with ${\cal F}_t:=\si\left(X_s,\,0\le s\le t\right)$.
Point $\partial$ is called
the {\em cemetery state} of the process.
Let ${\cal D}:=D([0,+\infty);\BR^d)$ be the subset of 
$\bar{\cal D}$ consisting of all 
$\om: [0,+\infty)\to\BR^d$.
Given $t\ge0$ define the {\em shift operator} $\theta_t:\bar{\cal D}\to
\bar{\cal D}$ by $\theta_t(\om)(s):=\om(t+s)$, $s\ge0$.

\begin{definition}[A solution of the martingale problem associated
  with  $A$]
\label{df020311-20}
Suppose that $\mu$ is a Borel probability measure on $\bbR^d$.  A
Borel  probability 
measure $P_{\mu}$ on $\bar {\cal D} $ is called a {\em solution of the martingale problem}  associated
  with $A$ with the initial distribution $\mu$ if
\begin{itemize}
\item[i)] $P_{\mu}[X_0\in Z]=\mu[Z]$ for any Borel measurable
 $Z\subset \bbR^d$.
\item[ii)] For every $f\in C_b^2(\BR^d)$
- a $C^2$-smooth function bounded with its two derivatives on $\R^d$ - the process
\begin{equation*}
%\label{Mtf}
M_t[f]:= f(X_t)-f(X_0)-\int_0^tAf(X_r)\,dr,\, t\ge 0
\end{equation*}
is a  (c\'{a}dlag)  martingale under measure $P_\mu$ with respect to natural filtration
$({\cal F}_t)_{t\ge0}$ generated by the canonical process. 

\item[iii)] $P_\mu[{\cal D}]=1$.
\end{itemize}
\end{definition}

As usual we
write $P_x:=P_{\delta_x}$,  $x\in\bbR^d$ and say that $x$ is the
initial condition. We also let $P_\partial=\delta_\partial$. The expectations with respect to $P_{\mu}$ and
$P_x$ shall be denoted by $\E_{\mu}$ and $\E_x$, respectively.

\begin{definition}[A strong Markovian solution of the martingale problem]
We say that a family of Borel probability measures
  $(P_x)_{x\in\bbR^d}$   on
  $\bar{\cal D} $ is  a strong Markovian solution to the martingale problem  associated with $A$
if:
\begin{itemize}
\item[i)] each $P_x$ is a solution of the martingale problem
  associated with $A$, corresponding to the
initial condition at $x$,

\item[ii)] the canonical  process $(X_t)$ is 
  strongly Markovian with respect to the natural filtration $({\cal
    F}_t)$ and the family $(P_x)_{x\in\bbR^d}$,
\item[iii)] the mapping $x \to P_x[C]$ is measurable for any Borel
  $C\subset{\cal D}$,
\item[iv)] for any Borel probability measure $\mu$ on $\bbR^d$  the probability measure
 $$
P_{\mu}(\cdot) :=\int_{\bbR^d}
P_x (\cdot) \mu(dx)
$$
is a solution to the martingale problem associated with $A$ with the
initial distribution $\mu$.
\end{itemize}
\end{definition}

\bigskip

Our hypothesis concerning the
martingale problem can be formulated as follows.
\begin{itemize}
\item[A4)] 
The martingale problem associated with the operator
$A$ admits a strong Markovian solution.
\end{itemize}

We  discuss   conditions  sufficient  for the validity of A4) in Section \ref{sec.a4}.

\bigskip

\subsection{Analytic description of the canonical process}

\subsubsection{Exit time, gauge function, transition probability semigroup and
  resolvent operator} 

\label{ETS}

Suppose that the operator $A$ satisfies condition A4). 
For a given domain $D$ define 
the exit time $ {\tau_D:{\bar {\cal D}}\to[0,+\infty]}$ of the canonical process $(X_t)_{t\ge0}$ from $D$ as
\begin{equation}
\label{tau-D}
\tau_D:=\inf[t>0:\,X_t\not\in D].
\end{equation}
It is a stopping time,
i.e. for any
$t\ge0$ we have $[\tau_D\le t]\in {\cal F}_t$, see Theorem I.10.7, p. 54 of
\cite{bg} and Theorem IV.3.12, p. 181 of \cite{ethier-kurtz}. 
%\textcolor{red}{By
%$\tau_{\partial}$ we denote the stopping time corresponding to
%$D=\bbR^d$ (i.e. the time of reaching the cemetery state).}

We formulate  the following hypothesis:
\begin{itemize}
\item[ET)] the exit time from $D$ is a.s. finite, i.e.
\begin{equation}
\label{012603-20}
P_x[\tau_D<+\infty]=1,\quad x\in D.
\end{equation}
\end{itemize}

\begin{remark}
\label{rmk010211-20}
It turns out that uniform ellipticity of $\bf Q$ (cf. \eqref{la-K})  implies ET)
for a bounded domain $D$, see e.g.  \cite[Lemma 4]{MoFo}. In many
cases however we can also verify it without assuming
 {uniform ellipticity condition}, see Section  \ref{rm3.14} below for a
 more detailed discussion.  
\end{remark}

\begin{definition}[The gauge function for $c(\cdot)$ and domain $D$]
\label{df4.2}
The function
\begin{equation*}
%\label{eq2.pvf}
v_{c,D}(x)=\E_xe_{c}(\tau_D),\quad x\in \BR^d,
\end{equation*}
where 
\begin{equation*}
%\label{ecD}
e_{c}(t):=\exp\left\{-\int_0^{t}c(X_r)\,dr\right\},\quad t\ge0,
\end{equation*}
is called the {\em gauge function} corresponding to $c(\cdot)$ and domain $D$, cf Section 4.3 of \cite{cz}.
\end{definition}
Obviously, if  $c$ satisfies $A3)$, then $0\le v_{c,D}\le 1$.
Let us denote then
\begin{equation}
\label{wcD}
w_{c,D}(x):=1-v_{c,D}(x),\quad x\in \BR^d.
\end{equation}
Obviously  $w_{c,D}(x) \ge0$, $x\in \BR^d$. 
We postpone a more detailed discussion of properties of
$w_{c,D}$ until Section \ref{sub.sub12} below. 

Define the transition semigroup generated by
operator (\ref{A}) on $D$ with the null exterior condition
\begin{equation}
\label{PDt}
P^D_tf(x):=\E_x\left[f(X_t), \, t<\tau_D\right]\quad t\ge 0,\quad f\in B_b(D).
\end{equation}
\begin{definition}
\label{exces}
Suppose that $\al\ge0$.
A Borel function $f:E\to[0,+\infty)$ is called {\em $\al$-excessive}, if
$P_t^Df(x)\le e^{\al t}f(x)$, $t\ge0$ and $\lim_{t\to0+}P_t^Df(x)=f(x)$ for all
$x\in E$. When $\al=0$,  the function is simply called {\em excessive}.
\end{definition}

We define the resolvent of
$A$ on $D$ for any non-negative
  $f\in B(D)$ and $\al\ge 0$ by letting
\begin{equation}
\label{RDt}
R^D_\alpha f(x):=\int_0^\infty e^{-\alpha t}P^D_tf(x)\,dt =\E_x\left[\int_0^{\tau_D}e^{-\alpha t}f(X_t)\,dt\right],\quad x\in D.
\end{equation}
Set $R^D:=R^D_0$. The definition of $R^D_\alpha$   obviously extends to 
$f\in B_b(D)$, when $\alpha> 0$.

 The operators  $(R^D_\al )_{\al\ge 0}$ satisfy the resolvent
identity
\begin{equation}
\label{res-id}
R^D_\al-R^D_\beta=(\beta-\al)R^D_\al R^D_\beta,\quad \al,\beta\ge 0.
\end{equation}
We shall denote by $R^D_\alpha (x,\cdot)$ the Borel measure on $D$,
 defined by 
$$
R^D_\alpha (x,B)=R^D_\alpha 1_B(x), \quad B\in {\cal B}(D),\,
 x\in D.
$$

Suppose that $m$ is a Borel measure on $D$ and $t\ge0$. We
  define the transfer measure
$mP_t(B):=\int_{D}P_t1_Bdm$, $B\in {\cal B}(D)$. An analogous notation
shall be used in the case of the resolvent family
$\big(R^D_\alpha\big)$. Measure $m$ is called {\em excessive} if
$mP_t\le m$ for all $t\ge0$.

It will be convenient for us to work sometimes with measures $P_x^c$, $x\in D$
defined on the  path space $\bar{\cal D}$, cf Section \ref{SMM}, that
correspond to the process
$(X_t)$ killed at rate $c(X_t)$, see \cite[Section III]{bg}. We shall denote by $(P_t^{c,D})_{t\ge0}$ and $(R^{c,D}_\al)_{\al\ge0}$
the corresponding  semigroup and the resolvent family, determined by formulas
analogous to \eqref{PDt} and \eqref{RDt}, with respect to $P_x^c$. 

\subsection{Weak subsolution of
  $(-A+c)v=g$}

\label{sec2.5}

%\textcolor{red}{\em \large{We need a reference here}}

%\textcolor{blue}{\bf \large{DOTAD}}

\begin{definition}[Weak subsolution, supersolution and solution]
\label{df3.1g}
Suppose that $c(\cdot)$ satisfies the hypothesis A3), $D$ is open and $g(\cdot)$ is a
Borel measurable function on $D$ such that $g\le0$.
A function $u\in  B_b(\BR^d)$ is called a {\em weak subsolution}
of the equation
\begin{equation} 
\label{eq3.1a}
(-A+c)v(x)=g(x),\quad x\in D,
\end{equation} 
if 
\begin{equation}
\label{subsol.g}
u(x)\le \mathbb E_x\left[e_c(\tau_D\wedge t)u(X_{\tau_D\wedge t})\right]+\mathbb E_x\left[\int_0^{\tau_D\wedge t}e_c(r)g(X_r)\,dr\right] \, \mbox{for all } x\in D,\, t\ge 0
\end{equation}
 {for some solution of the martingale problem associated with
  $A$. Throughout the remainder of the paper we assume that  the probability
  measure appearing in \eqref{subsol.g} (thus also the
  respective semigroup   and the resolvent family) is fixed.}

 Denote by  $\mathcal U_c(g)$    the set
of all   weak  
subsolutions to \eqref{eq3.1a} and by $\mathcal U_c^+(g)$ its subset
consisting  of those
$u  $, for which   $\bar u_{D_S }\ge0$.   We let  $\mathcal
  U_c=\mathcal U_c(0)$ and  $\mathcal
  U_c^+=\mathcal U_c^+(0)$.

We say that $u$ is a {\em weak supersolution} of \eqref{eq3.1a}
if the sign $\le$ in the above inequality is replaced by $\ge$. Furthermore $u$ is a  {\em weak solution}
if it is both weak sub- and supersolution. Observe that the regularity of   a subsolution to \eqref{eq3.1a}
is subject to no restriction (besides  boundedness).
\end{definition}

\begin{remark}
The above definition of a weak subsolution coincides
with that of \cite[Definition 3.2, p.  1551]{BL01}.  It is used there
  in the formulation of the  Alexandrov-Bakelman-Pucci type
estimates and maximum principles  for a certain class of generators  of  L\'{e}vy 
processes. 
\end{remark}

\begin{remark}
Note that, if $u\in B_b(D)$ is a subsolution of \eqref{eq3.1a}, then 
  $-R^{c,D}g(x)\ge0 $ is finite for each $x\in D$. In fact $R^{c,D}g\in B_b(D)$ and 
\begin{equation}
\label{021101-21}
\tilde u(x):=u(x)-R^{c,D}g(x),\quad x\in D
\end{equation}
is a subsolution of the homogeneous equation $(-A+c)v(x)=0$.
\end{remark}

Condition  \eqref{subsol.g} can be rewritten using the path measure
$P_x^c$ of the killed process. It reads
\begin{equation}
\label{subsol.g.eq}
u(x)\le \mathbb E^c_x u(X_{\tau_D\wedge t})+\mathbb E^c_x\left[\int_0^{\tau_D\wedge t} g(X_r)\,dr\right] ,\quad \mbox{for all } x\in D,\, t\ge 0.
\end{equation}
Thus,
\begin{equation}
\label{subsol.g.eq1}
u(x)\le P^{c,D}_tu(x)+\mathbb E^c_x [u(X_{\tau_D})\mathbf{1}_{\{t\ge \tau_D\}}],\quad x\in D.
\end{equation}

{
\subsection{Relation between weak   and other types of subsolutions}

\label{svc.w1}
We briefly  discuss the relationship between the
notion of a 
 weak subsolution, introduced in the foregoing,   and some other notions of
soubsolutions that appear throughout the literature, such as: the
{\em classical}, {\em Sobolev} and {\em viscosity subsolutions}. Analogous
 statements can be made about  supersolutions and solutions.

Let us    recall the  definition of a classical subsolution.

\begin{definition}
We say that a function $u$ is a {\em classical subsolution} of  \eqref{eq3.1a} in $D$ if 
 $u\in C^{2}(D)\cap C_b(\BR^d)$ and  
\begin{equation}
\label{subsol}
-Au(x)+c(x)u(x)\le g(x),\quad  x\in D.
\end{equation}
\end{definition}
By  the definition of the martingale problem and It\^o's formula (applied to the product $u(X_t)e_c(t)$)
we can easily deduce that any {\em classical subsolution} is a  {\em
  weak subsolution}  {with respect to any solution of the martingale
  problem associated with operator $A$}.

 \begin{definition} A   {\em Sobolev subsolution} to \eqref{eq3.1a} is a function $u\in W^{2,p}_{\rm loc}(D)\cap
C_b(\bbR^d)$ ($p>d$) such that  \eqref{subsol}  holds $\ell_d$-a.e.
\end{definition}
According to the remark  following \eqref{MA}  $Au$ is well
defined as an element of $L^p(D)$ for any  $u\in W^{2,p}_{\rm loc}(D)\cap
C_b(\bbR^d)$ ($p>d$).
If we assume additionally that  the matrix $ {\bf Q}(x)$ is uniformly elliptic on compacts, then by \cite[Proposition 4.6]{kk01},  \eqref{subsol.g} holds.
Therefore,  under uniform ellipticity condition,  each {\em Sobolev
  subsolution} is  a {\em weak subsolution} of \eqref{eq3.1a}. 

\begin{definition}  A {\em viscosity subsolution} to \eqref{eq3.1a} is
  an upper semi-continuous  function $u\in B_b(\BR^d)$ satisfying:   for any  $x\in D$ and  $\varphi\in C^2_b(\BR^d)$
such that $\varphi(x)=u(x)$ and $\varphi(y)\ge u(y),\, y\in\BR^d$ we have
\[
-A\varphi(x)+c(x)\varphi(x)\le g(x). 
\]
\end{definition}
It goes beyond the scope of this paper to examine  in depth  
the relation between the  weak subsolutions to \eqref{eq3.1a} and the
viscosity subsolution. However,  some sufficient
conditions for viscosity solutions to be  also weak subsolutions can
be formulated. We
postpone a more detailed discussion  untill Section \ref{sec6.5}.

}

\section{The main result - Hopf's lemma}
\label{NS-hopf}

Throughout the remainder of the paper we shall always assume
hypotheses A1) - A4) and ET) without further mentioning them in the subsequent
formulations of the results.

We start with the formulation of the  {\em
  minorization condition}. 
\begin{definition}[Minorization
condition $M(\al, \psi_D^\al, \nu^\alpha_D)$]
\label{minorization}
{Suppose
that $\al\ge0$, $\psi_D^\al:D\to[0,+\infty)$ and  $\nu^\alpha_D$ is a
$\si$-finite Borel measure  on $D$.  
We say the minorization
condition $M(\al, \psi_D^\al, \nu^\alpha_D)$ holds if 
\begin{equation}
\label{RDa}
R^D_\alpha f(x)\ge \psi_D^\al(x) \int_D f d\nu_D^\alpha,\quad x\in
D,\quad f\in B^+_b(D).
\end{equation}}
\end{definition}

The following version of the Hopf lemma holds.
\begin{theorem}[The Hopf lemma]
\label{th3.hl.nuc1}
Recall that $\bar c_D=\sup_{x\in D}c(x)$.  Assume that the  operator
$A$ 
satisfies  
$M(\bar c_D, \psi_D^{\bar c_D}, \nu^{\bar c_D}_D)$ for some strictly
positive function 
$\psi^{\bar c_D} _D:D\to(0,+\infty)$ and  measure $\nu^{\bar c_D} _D$.
\begin{enumerate}
\item[1)]
Suppose that ${\rm supp}\,\nu^{\bar c_D} _D=D$ and
 $u\in \mathcal U^+_c$  is   non-constant  and continuous in $D$.
Then, there exists $a>0$  for which
\begin{equation}
\label{010104-20av1}
\bar u_{D_S }-u(x)\ge a  \psi^{\bar c_D}_{D}(x),\quad x\in D.
\end{equation}
%Moreover, if $u\in C(\overline D)$, then $a,\delta$ do not depend on $\hat x$.
\item[2)]  Suppose that  $m\ll \nu_D^{\bar c_D}$ and 
 $u\in\mathcal U^+_c$  is   non-constant $m$-a.e.   in $D$.
Then, the conclusion of part 1) is in force.
\end{enumerate}
\end{theorem}

The proof of this result is presented below. Due to its length we
divide it into three parts. First, in Section \ref{pfT3.2p} we
introduce some preliminaries needed in the argument. The main step of
the proof is made in Section \ref{pfT3.2m}, modulo some technical
estimate given in \eqref{eq.np1}. The latter is  shown in Section \ref{pfT3.2f}.

\subsection{Preliminaries}

\label{pfT3.2p}

\subsubsection{On the weak maximum principle}
\begin{lemma}
\label{lm.lm.151120}
We have 
\begin{equation}
\label{010793-20dfg1v}
P_x(X_{\tau_D}\in \partial_S D)=1,\quad x\in D.
\end{equation}
\end{lemma}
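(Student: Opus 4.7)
The claim is a structural statement about the exit behaviour of the canonical process, so the argument relies only on hypotheses A4) and ET). My plan is to decompose the event $\{\tau_D<+\infty\}$ according to whether the exit from $D$ is continuous or occurs through a jump, and then to use the jump structure encoded in the martingale problem of Definition~\ref{df020311-20} to control the landing point in the latter case.

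First, since $D$ is open and the paths of $X$ are c\'adl\'ag with $X_t\in D$ for every $t<\tau_D$, passing to the left limit yields $X_{\tau_D-}\in\mathrm{cl}\,D=D\cup\partial D$ on $\{\tau_D<+\infty\}$; by ET) this event has full $P_x$-measure. On the continuous-exit subevent $\{X_{\tau_D}=X_{\tau_D-}\}$ one immediately obtains $X_{\tau_D}\in\mathrm{cl}\,D$, which together with $X_{\tau_D}\notin D$ (by the very definition of $\tau_D$) places $X_{\tau_D}\in\partial D\subset\partial_{S}D$.

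On the jump-exit subevent $\{\Delta X_{\tau_D}:=X_{\tau_D}-X_{\tau_D-}\neq 0\}$ the key input is the identification, via \eqref{Mtf} applied to a suitably rich class of test functions $f\in C_b^2(\bbR^d)$, of the predictable compensator of the integer-valued jump measure of $(X_t)$ as $N(X_{s-},dy)\,ds$. A standard semimartingale argument then yields that, $P_x$-a.s., every jump satisfies $\Delta X_s\in \mathrm{supp}\,N(X_{s-},\cdot)$. Applied at $s=\tau_D$ on the further subevent $\{X_{\tau_D-}\in D\}$, this forces
$$X_{\tau_D}=X_{\tau_D-}+\Delta X_{\tau_D}\in \mathcal{S}_{X_{\tau_D-}}\subset \mathcal{S}(D),$$
and combined with $X_{\tau_D}\notin D$ it yields $X_{\tau_D}\in\mathcal{S}(D)\setminus D\subset\partial_{S}D$.

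The one remaining case, $F:=\{X_{\tau_D-}\in\partial D,\ \Delta X_{\tau_D}\neq 0\}$---where the process reaches $\partial D$ continuously and jumps out at the same instant---is the main obstacle. I would rule it out through quasi-left-continuity of strong Markov solutions of the martingale problem: approximating $D$ from inside by open sets $D_n\uparrow D$ with $\mathrm{cl}\,D_n\subset D$ produces stopping times $\tau_{D_n}\nearrow\tau_D$ that on $F$ increase strictly, with $X_{\tau_{D_n}}\to X_{\tau_D-}\in\partial D$; the fact that the predictable compensator $N(X_{s-},dy)\,ds$ has no atoms in the time variable, together with the strong Markov property applied at each $\tau_{D_n}$, forces the probability of a jump at $\lim_n\tau_{D_n}$ to vanish. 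Hence $P_x(F)=0$ and the lemma follows.
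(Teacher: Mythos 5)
Your proposal is correct, and it uses the same fundamental ingredient as the paper (the jump measure of the canonical process is governed by the L\'{e}vy kernel $N$), but it takes a more case-by-case route and invokes heavier machinery where the paper disposes of everything in a single stroke.

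The paper's proof rests entirely on one application of the Ikeda--Watanabe (L\'{e}vy system) formula,
\begin{equation*}
\E_x\Bigl[\,\sum_{0<s\le\tau_D}F(X_{s-},X_s)\mathbf{1}_{\{X_s\ne X_{s-}\}}\Bigr]
=\E_x\Bigl[\int_0^{\tau_D}\!\!\int_{\BR^d} F(X_s,X_s+y)\,N(X_s,dy)\,ds\Bigr],
\end{equation*}
with $F(x,z)=\mathbf{1}_{D}(x)\mathbf{1}_{\mathcal S(D)^c\setminus D}(z)$. The right-hand side vanishes because, for every $s$, $N(X_s,\cdot)$ is carried by $\mathcal S_{X_s}-X_s\subset\mathcal S(D)-X_s$ whenever $X_s\in D$. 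Hence, almost surely, no jump on $(0,\tau_D]$ whose departure point lies in $D$ can land in $\mathcal S(D)^c\setminus D$; combined with $X_{\tau_D}\notin D$ and, for a continuous exit, $X_{\tau_D}\in\mathrm{cl}\,D$, the lemma follows. One useful remark: the right-hand side above is a Lebesgue integral in $s$, and for Lebesgue-a.e.\ $s<\tau_D$ one in fact has $X_{s-}=X_s\in D$; so the same identity holds verbatim with $\mathbf{1}_{\mathrm{cl}\,D}$ in place of $\mathbf{1}_{D}$, and the right-hand side is still zero. With that cosmetic change the formula already covers the borderline jump-from-$\partial D$ case as well, because the left-hand sum still registers the jump at $s=\tau_D$, whereas the right-hand integral never ``sees'' the single Lebesgue-null instant $\tau_D$. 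The paper's argument is thus complete once this is observed.

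Your treatment of the continuous-exit and jump-from-$D$ cases is entirely correct, and you correctly pinpoint the subtlety $F=\{X_{\tau_D-}\in\partial D,\ \Delta X_{\tau_D}\ne 0\}$: the definition of $\mathcal S(D)$ runs over $x\in D$ only, so the L\'{e}vy kernel at a boundary point is a priori unconstrained by it. Your proposed remedy---ruling out $F$ through quasi-left-continuity of the strong Markov solution, proved via the time-diffuseness of the compensator $N(X_{s-},dy)\,ds$ and the strong Markov property at the approximating times $\tau_{D_n}$---is sound; this is indeed the standard way to obtain quasi-left-continuity for Markov semimartingales with a diffuse compensator. It is, however, strictly more work than the paper needs: you must establish a property (quasi-left-continuity of the family $(P_x)$) that the hypotheses A1)--A4) never state explicitly, whereas the L\'{e}vy system computation with $\mathbf{1}_{\mathrm{cl}\,D}$ encodes the relevant consequence of the diffuse compensator directly and makes the case $F$ a non-issue. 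Your sketch of the approximation $\tau_{D_n}\nearrow\tau_D$ on $F$ is also slightly underspecified (one needs, e.g., $\mathrm{dist}(\mathrm{cl}\,D_n,\partial D)>0$, which requires either boundedness of $D$ or a careful choice of $D_n$), but this is a matter of polish rather than substance.
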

\begin{proof}
Using the Ikeda-Watanabe formula, see \cite[Remark 2.46,
page 65]{BSW}, we obtain
\begin{equation*}
\begin{split}
&\mathbb E_{x}\left[\sum_{0<s\le\tau_D}\mathbf{1}_{D}(X_{s-})\mathbf{1}_{\mathcal
    S(D)^c\setminus D}(X_s)\right]\\
&
=\mathbb
E_{x}\left[\int_0^{\tau_D}\,ds\int_{\BR^d}\mathbf{1}_{D}(X_{s-})\mathbf{1}_{\mathcal
    S(D)^c\setminus D}
(y)N(X_{s-},dy-X_{s-})\right].
\end{split}
\end{equation*}
From the definition of the range of non-locality $\mathcal S(D)$, see \eqref{cSD}, we conclude
that the right-hand side of the above equation vanishes. Thus,
$X_{\tau_D}\in \mathcal S(D)\setminus D$, if
$X_{\tau_D}\not=X_{\tau_D-}$, or $X_{\tau_D}\in \partial D$, if
otherwise. Hence  \eqref{010793-20dfg1v} follows from the definition
of  $\partial_S D$, see \eqref{exbexc}.
\end{proof}

The following version of the weak maximum principle   is a direct
consequence of Lemma \ref{lm.lm.151120}.

\begin{proposition}[Weak maximum principle for $A$]
\label{prop012904-20}
If $u$  is a weak subsolution to
\eqref{eq3.1a}, then
 \begin{equation}
\label{032804-20apx}
\bar u_D\le   \bar u^+_{\partial_{S}D}.
\end{equation}
When $c\equiv 0$ we get \eqref{032804-20apx}  {with $\bar u^+_{\partial_{S}D}$ replaced by $\bar u_{\partial_{S}D}$}.
% \begin{equation}
% \label{032804-20}
% \sup_{x\in D}u(x)\le \sup_{x\in D^c}u(x).
% \end{equation}
\end{proposition}
%\textcolor{red}{\em Why $u^+$???}

As a corollary to the above proposition we conclude that for any weak
subsolution $u$ to \eqref{eq3.1a} with $\bar u_{D_S }\ge 0$, we have
\begin{equation}
\label{eq.mu.main}
\bar u_{D_S }=\bar u_{\partial_SD}.
\end{equation}

\subsubsection{Some semimartingales associated with the canonical process}
\label{rem.finver1}

Define 
\begin{equation*}
%\label{042001-21}
w:=\bar u_{D_S}-u\ge 0\quad\mbox{in}\,\, D,
\end{equation*}
 where $u$ is a subsolution to \eqref{eq3.1a} such that $\bar
 u_{D_S}\ge 0$. From inequality \eqref{subsol.g.eq1} we infer that 
 \[
 P^{c,D}_tw\le w+\mathbb E^c_x[(-\bar u_{D_S}+u(X_{\tau_D}))\mathbf1_{\{t\ge \tau_D\}}].
 \]
 Therefore,    by Lemma \ref{lm.lm.151120}, we have  
\begin{equation}
\label{011101-21}
P^{c,D}_tw\le w\quad\mbox{ in  }D\quad\mbox{ for any }t\ge 0.
\end{equation}
By \cite[Exercise II.2.17]{bg} 
\begin{equation*}
%\label{032001-21}
\hat w:= \sup_{t>0}P^{c,D}_t w=\lim_{t\to 0^+}P^{c,D}_t w
\end{equation*}
 is an excessive function
with respect to $(P^{c,D}_t)_{t\ge0}$. Thanks to  \eqref{011101-21} we
have
\begin{equation}
\label{011101-21a}
\hat w\le w\quad\mbox{ in  }D.
\end{equation}
Define 
\begin{equation}
\label{hatu}
\hat u:= \lim_{t\to
  0^+}P^{c,D}_tu,\quad  \mbox{in $D$}
\end{equation}
 (the limit exists, since $\lim_{t\to
  0^+}P^{c,D}_t w$ exists). 
Clearly, \begin{equation}
\label{011101-21b}
\hat w=\bar u_{D_S}-\hat u\quad\mbox{ in  }D.
\end{equation}
Therefore, from \eqref{011101-21a} and the fact that $\hat w\ge0$, we have
\begin{equation}
\label{eq.vin.np1}
\hat u(x)\ge u(x),\,\, x\in D.
\end{equation}
From this and \eqref{hatu}
\begin{equation*}
%\label{eq.vin.np115gz}
\sup_D \hat u=\sup_D u.
\end{equation*}
 {We get also, see the comment preceding \cite[Theorem II.3.6]{bg}, that
\begin{equation}
\label{eq.vin.np115gxz}
R^{c,D}_\alpha\hat u(x)=R^{c,D}_\alpha u(x),\quad x\in D,\, \alpha\ge 0.
\end{equation}}
A process $\big(Z_t\big)_{t\ge0}$ is called  {\em additive},
with respect to $(P^{c,D}_x)$, if for any $s,t\ge 0$,
$Z_{t+s}=Z_s+Z_t\circ\theta_s$ a.s., where $\theta_s$ is the shift operator.

\begin{proposition}
\label{prop011101-21}
Suppose that $u\in B_b(D)$ is a weak subsolution of
\eqref{eq3.1a}. Then, there exist a c\`adl\`ag,
increasing, predictable  process $\big(A_t\big)_{t\ge0}$ and
uniformly integrable $P^{c,D}_x$-martingale $\big(M_t\big)_{t\ge0}$, such that for each $x\in D$ we have
\begin{equation}
\label{eq.np0}
\hat u(X_t)=\hat u(X_0)+A_t-\mathbf{1}_{\{t\ge \tau_D\}} \bar u_{D_S}-\int_0^tg(X_r)\,dr+M_t,\quad t\ge 0,\, P^{c,D}_x\mbox{-a.s.}
\end{equation}
Both processes $\big(A_t\big)_{t\ge0}$ and
$\big(M_t\big)_{t\ge0}$ are additive with respect to  
$(P^{c,D}_x)$.
\end{proposition}
\proof
We can assume with
no loss of generality that $g\equiv0$. Otherwise we would consider $\tilde u$, given by \eqref{021101-21}.
By \cite[Theorem III.5.7]{bg}, the process $\big(\hat w(X_t)\big)_{t\ge0}$ is a
right-continuous bounded supermartingale under measure $P^{c,D}_x$ for any
$x\in D$. Let $\hat Y_t:=\hat w(X_t)-\hat w(X_0)$, $t\ge0$.   By the
version of Doob-Meyer decomposition, see \cite[Theorem 3.18]{CJPS},
 we conclude that there exist a c\`adl\`ag
increasing predictable  process $\big(A_t\big)_{t\ge0}$ and
uniformly integrable martingale $\big(M_t\big)_{t\ge0}$ under measure
$P^{c,D}_x$ such that $\hat Y_t=M_t-A_t$, $t\ge0$. According to ibid. both processes $\big(A_t\big)_{t\ge0}$ and
$\big(M_t\big)_{t\ge0}$  are additive with respect to
$\big(P^{c,D}_x\big)$. Therefore their choices do not  depend on $x\in D$. Formula \eqref{eq.np0}
follows directly from \eqref{011101-21b}.\qed

\bigskip

Let 
\begin{equation*}
%\label{Yt}
Y_t:= \hat u(X_t)+ \mathbf{1}_{\{t\ge \tau_D\}} \bar u_{D_S}
,  \quad t\ge 0.
\end{equation*}
 Formula \eqref{eq.np0} can be rewritten as follows
\begin{equation}
\label{eq.np0vy}
Y_t=\hat u(X_0)+A_t-\int_0^tg(X_r)\,dr+M_t,\quad t\ge 0,\, P^{c,D}_x\mbox{-a.s.}
\end{equation}
 Since $\big(M_t\big)_{t\ge0}$, $\big(A_t\big)_{t\ge0}$ are additive,  with respect to
  $(P^{c,D}_x)$, so is the process
$\big(Y_t\big)_{t\ge0}$. Since $\hat w(\partial)=0$  we have 
$A_t=A_{t\wedge\tau_D}$, $t\ge0$, $P^{c,D}_x$ a.s. The same also holds for $\big(M_t\big)_{t\ge0}$ and $\big(Y_t\big)_{t\ge0}$.

\subsubsection{Some properties of $\hat u$}

Recall that
$\underline u_D:=\inf_Du$ and $\bar u_D:=\sup_Du$. If $m$
is a $\si$-finite Borel measure on $D$ we let
$$
\underline u_{D,m}:=\essinf_Du, \qquad \bar u_{D,m}:=\esssup_Du,
$$ 
where the respective essential supremum and infimum correspond  to the
 measure $m$. 
In the case $\hat u$ is defined by \eqref{hatu} we define
\begin{equation*}
%\label{022001-21}
\underline{\hat u}_{D,m}:=\essinf_D\hat u, \qquad 
\overline{\hat u}_{D,m}:=\esssup_D\hat u.
\end{equation*}
The following result shall be useful in the proof of  Theorem
\ref{th3.hl.nuc1}.
\begin{proposition}
\label{prop012001-21}
Suppose that $u$ is a weak subsolution of \eqref{eq3.1ab}. If $u\in
C_b(\bbR^d)$, then
$\hat u=u$. Furthermore, assume that $m$ is a $\si$-finite Borel measure
and $u$ is a bounded measurable subsolution such that $\underline
u_{D,m}<\bar u_{D,m}$.  Suppose   that the operator $A$
satisfies  the minorization condition
$M(\bar c_D, \psi_D^{\bar c_D}, \nu^{\bar c_D}_D)$ for some
 $\psi^{\bar c_D} _D:D\to[0,+\infty)$, that is not identically equal $0$,
 $m$-a.e.  and    $m\ll \nu^{\bar c_D} _D$.
 Then
\begin{equation}
\label{012001-21}
\underline{\hat u}_{D,m}<\overline{\hat u}_{D,m}.
\end{equation}
\end{proposition}
\proof
The first part of the proposition concerning a continuous subsolution is obvious,   thanks to the continuity of $X_t$ at $t=0$.
To prove \eqref{012001-21},  {we show that $\hat u=u$,
  $m$-a.e. on $D$. Suppose that $m(\{\hat u>u\})>0$.
Then $\nu^{\bar c_D}_D(\{\hat u>u\})>0$.
Applying
condition $M(\bar c_D, \psi_D^{\bar c_D}, \nu^{\bar c_D}_D)$ we
obtain
$$
R_{\bar c_D}^{c,D} (\hat u-u)(x)\ge \psi_D^{\bar c_D}(x) \int_D(\hat u-u)d\nu^{\bar c_D}_D,\quad x\in D,
$$
which contradicts  \eqref{eq.vin.np115gxz}.}\qed

\subsection{Main step in the proof of Theorem  \ref{th3.hl.nuc1}}
\label{pfT3.2m}
We carry out the proof of both parts of the theorem  simultaneously.
The main point of the proof is to show that there exists an excessive function $v$,
with respect to $(P^{c,D}_t)$, such that
\begin{equation}
\label{eq.np1}
\bar u_{D_S}-u(x)\ge v(x),\quad x\in D\quad\mbox{and }  \nu^{\bar c_D}_D(x:\,v(x)>0)>0,
\end{equation}
where $\nu^{\bar c_D}_D$ is the measure appearing in condition $M(\bar c_D, \psi_D^{\bar c_D}, \nu^{\bar c_D}_D)$.
Once this  is done, we construct an appropriate approximation of $v(x)$,
see \eqref{080309-20}  below, which can be bounded from below using the
minorization condition \eqref{RDa} and the conclusion of the theorem follows.

\subsubsection*{Conclusion of  the proof of the theorem under \eqref{eq.np1}}

Let us suppose that \eqref{eq.np1} holds    and we show  then how to conclude the proof of the theorem.
We construct an approximating sequence
of non-negative functions $(f_k)_{k\ge1}$ such that
$(R^{c,D}f_k(x))_{k\ge1}$
is increasing and
\begin{equation}
\label{080309-20}
\lim_{k\to+\infty}R^{c,D}f_k(x)=v(x),\quad x\in D.
\end{equation}
The sequence $(f_k)$ is constructed by  taking the Yosida approximation
\begin{equation*}
%\label{fk}
f_k(x) :=k\big(v (x)-kR^{c,D}_k v (x)\big), \quad x\in D,\, k\ge 1.
\end{equation*} 
By  \cite[(2.6) of
Chapter II, p. 73]{bg} we have $R^{c,D}f_k(x)=kR^{c,D}_k v (x),\, x\in D$.
Therefore, by virtue of
\cite[(2.4) of Chapter II, p. 73]{bg}, the sequence
$(R^{c,D}f_k(x))_{k\ge1}$ is monotone increasing   for each $x\in D$
fixed  and by Proposition II.2.6 of ibid. \eqref{080309-20} holds.
Using the monotonne convergence theorem, we conclude from
\eqref{080309-20} that for each $n\ge1$,
\begin{align}
\label{030409-20}
\nonumber \lim_{k\to+\infty}\mathbb E^c_x\left[\int_0^{\tau_D\wedge n} f_k(X_s)\,ds\right]&=
\lim_{k\to+\infty} R^{c,D}f_k(x)-P^{c,D}_nR^{c,D}f_k(x)\\&=v (x)-P^{c,D}_n v (x)=: v_n(x),\quad x\in D.
\end{align}
Obviously $(v_n(x))_{n\ge1}$ is increasing for each $x\in D$ fixed and
\begin{equation*}
%\label{060409-20}
\lim_{n\to+\infty}v_n(x)=v (x), \quad x\in D.
\end{equation*}
Therefore, by the second inequality in \eqref{eq.np1}, there exists $N\ge 1$ such that  $\int_D v_N d\nu_D^{\bar c}>0$.

Substituting $R^D_{\bar c_D+1}f$
in place of $f$ in \eqref{RDa}, with $\al=\bar c_D$, we get, by the
resolvent identity \eqref{res-id}, that
\begin{equation}
\label{RDa14da}
R^D_{\bar c_D} f(x)\ge R^D_{\bar c_D} R^D_{\bar c_D+1}f(x)\ge\psi_D^{\bar c_D}(x) \int_D R^D_{\bar c_D+1}f 
d\nu_D^{\bar c_D},\quad x\in D,\quad f\in B^+_b(D).
\end{equation}
From 
\eqref{RDa14da} we conclude that
\begin{equation}
\label{020409-20}
R^{c,D} f_k(x)\ge  R^D_{\bar c_D} f_k(x)\ge \psi_D^{\bar c_D}(x) \int_D R^{D}_{\bar c_D+1}f_k d\nu_D^{\bar c_D},\quad x\in D.
\end{equation}
Moreover, for each $N\ge1$
\begin{align*}
& R^{D}_{\bar c_D+1}f_k(x)= \mathbb E_x\left[\int_0^{\tau_D}e^{-s(\bar
                        c_D+1)}f_k(X_s)\,ds\right]\\
&
\ge  \mathbb E_x\left[\int_0^{\tau_D\wedge N}e^{-s(\bar c_D+1)}e^{-\int_0^sc(X_r)\,dr}f_k(X_s)\,ds\right]
\ge  e^{-N(\bar c_D+1)} \mathbb E^c_x\left[\int_0^{\tau_D\wedge N}f_k(X_s)\,ds\right].
\end{align*}
Substituting into \eqref{020409-20} we get
\[
R^{c,D}f_k(x) \ge e^{-N(\bar c_D+1)}   \psi_D^{\bar c_D}(x) \int_D \mathbb E^c_y\left[\int_0^{\tau_D\wedge N}f_k(X_s)\,ds\right] \nu_D^{\bar c_D}(dy),\quad x\in D.
\]
Letting $k\rightarrow \infty$, we get, cf \eqref{030409-20},
\begin{equation}
\label{040409-20}
v(x) \ge b_N \psi_D^{\bar c_D}(x),\quad x\in D,
\end{equation}
where 
$$
b_N:=e^{-N(\bar c_D+1)}  \int_D v_N d\nu_D^{\bar c_D}>0.
$$
From \eqref{040409-20} and \eqref{eq.np1} we conclude that  \eqref{010104-20av1} holds.

\subsection{The proof of  \eqref{eq.np1}}

\label{pfT3.2f}

It suffices to show the existence of an excessive function $v$
such that
\begin{equation}
\label{eq.np1a}
\bar u_{D_S}-\hat u(x)\ge v(x),\quad x\in D\quad\mbox{and }  \nu^{\bar c_D}_D(x:\,v(x)>0)>0
\end{equation}  
(note   $\hat u$  in the first inequality). Estimate \eqref{eq.np1}  then   follows  from \eqref{eq.vin.np1}.
%and the weak maximum principle \eqref{032804-20}. 

Observe   that by \eqref{eq.np0}, \eqref{012603-20}
 and the
  Fatou lemma
\begin{equation}
\label{011901-21}
\bar u_{D_S}-\hat u(x)\ge \mathbb E^{c,D}_xA_{\tau_D},\quad x\in D.
\end{equation}
%where 
%$$
%A^g_t:=A_t-\int_0^tg(X_r)\,dr,\quad t\ge 0
%$$
%$\big(A_t\big)_{t\ge0}$ is as in \eqref{eq.np0}.
Since $\big(A_t\big)_{t\ge0}$ is additive, increasing, right-continuous and
$A_0=0$, the function 
\begin{equation}
\label{wgA}
v^A(x):=\mathbb E_x^{c,D}A_{\tau_D}
\end{equation}
is excessive  with respect to $(P^{c,D}_t)$. If 
\begin{equation*}
%\label{052001-21}
\nu^{\bar c_D}_D(x:\,v^A(x)>0)>0,
\end{equation*}
 then we  get \eqref{eq.np1a}, with $v(x):=v^A(x)$,  as a
conclusion 
from \eqref{011901-21}.

%We shall argue that \eqref{052001-21} holds by contradiction. 
Suppose  now that 
\begin{equation}
\label{012101-21a}
\nu^{\bar c_D}_D(x:\,v^A(x)>0)=0.
\end{equation} 
Since $\hat u$ is   non-constant in
$D$ in part 1) of the theorem and  non-constant in
$D$,  $m$-a.e. in part 2),  we
have $\underline{\hat u}_{D,m}<\overline{\hat u}_{D,m}$ in both  cases (see Proposition \ref{prop012001-21}). Suppose that  $\varepsilon>0$ and $a\in\BR$ are
such that 
\begin{equation}
\label{050309-20}
\underline{\hat u}_{D}\le\underline{\hat u}_{D,m} <a-2\varepsilon<a<a+2\varepsilon< \overline{\hat u}_{D,m}\le\overline{\hat u}_{D}=\bar u_D.
\end{equation}
The last equality follows from \eqref{eq.vin.np1}.
Let $\varphi:\BR\rightarrow \BR$ be smooth, nondecreasing, convex
and such that 
\begin{equation}
\label{phi}
 \varphi(x)=a-\varepsilon,\, x\le a-\varepsilon\quad\mbox{
and}\quad \varphi(x)=x,\, x\ge a+\varepsilon.
\end{equation}
%We let $S_t=[u]_{{\rm cl}\,D\cup\mathcal S(D)}\mathbf{1}_{\{t\ge \tau_D\}},\, t\ge 0$.
By the It\^o formula, see \cite[III Theorem 7.32, p.78]{Protter} applied to \eqref{eq.np0vy}, for any
smooth, {nondecreasing,} convex $\varphi:\BR\rightarrow \BR$,
\begin{align}
\label{RDa14dat9-1-21bv}
\nonumber
  \varphi(Y_{t\wedge \tau_D} )=\varphi(\hat u(X_0))&+\int_0^{t\wedge \tau_D}\varphi'(Y_{s-})\,dA_s+\int_0^{t\wedge \tau_D}\varphi'(Y_{s-})\,dM_s\\
&
+\frac12 \int_0^{t\wedge \tau_D} \varphi''(Y_s)\,d[M]^c_s+J_{t\wedge
  \tau_D},\quad t\ge0,\,  P^{c,D}_x\mbox{-a.s.}
\end{align}
Here  $([M]^c_t)_{t\ge0}$ is the continuous part of the quadratic
variation process $([M]_t)_{t\ge0}$  associated with  martingale $(M_t)_{t\ge0}$. The jump part of $([M]_t)_{t\ge0}$
is given by
\begin{equation}
\label{eq.jump1287}
  J_t:=\sum_{0<s\le
  t}\Big\{\varphi(Y_s)-\varphi(Y_{s-})-\varphi'(Y_{s-})(Y_s-Y_{s-})\Big\},\quad 
t\ge0,\, P^{c,D}_x\mbox{-a.s.}
\end{equation}
By convexity of $\varphi$ each term in the above sum is
non-negative. Thus, $(J_t)_{t\ge0}$ is an increasing c\`adl\`ag process satisfying
$J_0=0$.
Since all the processes appearing in the right-hand side
of \eqref{eq.jump1287} are additive (see Section
\ref{rem.finver1}), the process  $(J_t)_{t\ge0}$ is also additive.

Let $(J^p_t)_{t\ge0}$ be the dual predictable projection of $(J_t)_{t\ge0}$, with respect to
$P^{c,D}_x$, i.e. a predictable  increasing c\`adl\`ag process such that 
\begin{equation}
\label{Nt}
N_t:=J_t-J_t^p, \quad t\ge0
\end{equation} 
is
a martingale.  By \cite[Theorem 3.18]{CJPS}, $J^p$ exists and   is additive.
Define the process
\begin{equation*}
%\label{Kt}
K_t:=\frac12 \int_0^{t} \varphi''(Y_s)\,d[M]^c_s+J^p_t,\quad t\ge0.
\end{equation*}
Since all the processes appearing in the definition are additive, so
is  $(K_t)_{t\ge0}$. In addition, it is  c\`adl\`ag, increasing and $K_0=0$. We
infer therefore that 
$v^K(x):= \mathbb E^{c,D}_xK_{\tau_D}$ is an excessive function with respect to $(P^{c,D}_t)$.
Applying 
expectation $\mathbb E^{c,D}_x$ to both sides of
\eqref{RDa14dat9-1-21bv} and taking the limit $t\to+\infty$ we obtain
\begin{align}
\label{1-21bvas}
\varphi(\bar u_{D_S})\ge \varphi(\hat u(x))+\mathbb
  E^{c,D}_x\left[\int_0^{\tau_D}\varphi'(Y_{s-})\,dA_s\right]
+v^K(x),\quad
  x\in D.
\end{align}
By the definition of function $\varphi$, cf \eqref{phi}, we have 
\begin{equation*}
%\label{fiv}
\varphi(y)\ge y,\, y\in \BR\quad\mbox{ and }\quad\varphi(\bar u_{D_S})=
\bar u_{D_S}.
\end{equation*}
This combined  with \eqref{1-21bvas} gives the first inequality of
\eqref{eq.np1a}, with $v:=v^K$. We shall  prove   the second inequality
of \eqref{eq.np1} arguing by contradiction. 
Suppose that 
\begin{equation}
\label{012105-21}
v^K(x)=0\quad\mbox{  for  $\nu^{\bar
  c_D}_D$-a.e. $x\in D$.}
\end{equation}
 Since  we have also assumed  that $\nu^{\bar
  c_D}_D(v^A>0)=0$, therefore (cf \eqref{wgA})
\begin{equation}
\label{eq.xc3.m12.1}
P^{c,D}_x\Big(K_{\tau_D}=0\Big) =1\quad\mbox{and}\quad  P^{c,D}_x\Big(A_{\tau_D}=0\Big)=1,\quad \text{for $\nu^{\bar c}_D$-a.e. $x\in D$}.
\end{equation}
The first equality implies that
\begin{equation}
\label{Kt1}
\frac12 \int_0^{t\wedge \tau_D} \varphi''(Y_s)\,d[M]^c_s=-J^p_{t\wedge
  \tau_D},\quad t\ge0,\quad P^{c,D}_x\,\mbox{-a.s.}
\end{equation}
Combining \eqref{RDa14dat9-1-21bv}, 
\eqref{eq.xc3.m12.1} and \eqref{Kt1} we conclude that (cf \eqref{Nt})
\begin{equation}
\label{RDa14dat9-1-2}
 \varphi(Y_{t\wedge \tau_D})=\varphi(\hat u(X_0))+\int_0^{t\wedge \tau_D}\varphi'(Y_{r-})\,dM_r+N_{t\wedge \tau_D}, \quad t\ge0,\, P^{c,D}_x\mbox{-a.s.}
\end{equation}
for $\nu^{\bar c_D}_D$-a.e. $x\in D$.

Let 
$$
{\cal N}:=[x\in D:\, v^K(x)>0,\quad\mbox{or}\quad v^A(x)>0].
$$ 
We have 
\begin{equation}
\label{022105-21}
\nu^{\bar c_D}_D\big({\cal N}\big)=0,
\end{equation}
  by virtue of \eqref{012105-21} and \eqref{012101-21a}.
As we have already shown above, thanks to \eqref{RDa14dat9-1-2} and
the fact that $\hat u(\cdot)$ is bounded, the process  
$
M^\varphi_t:=\varphi(Y_{t\wedge \tau_D})$,
$t\ge 0$  is a bounded $(\FF_t)$-martingale under measure $P^{c,D}_x$ for  each
$x\in D\setminus{\cal N}$.
Obviously, from \eqref{phi}, 
\begin{equation}
\label{040309-20}
M^\varphi_t\ge a-\varepsilon,\quad  t\ge0.
\end{equation}
By \eqref{050309-20}, \eqref{022105-21}, and  either the continuity of $\hat u$ and  the fact
that $\nu^{\bar c_D}_D$ has full  support (in the case of part 1) of the theorem)  or   the fact  that   $m\ll \nu^{\bar c_D}_D$
 (in the case of part 2) of the theorem), we infer 
$$
\nu^{\bar c_D}_D\Big((D\setminus{\cal N})\cap \{\hat u<a-\varepsilon\}\Big)>0.
$$
Suppose now that $\bar x\in \{\hat u<a-\varepsilon\}\cap\Big( D\setminus{\cal
  N}\Big)$. Then, $M^\varphi_0=a-\varepsilon$, $P^{c,D}_{\bar x}$-a.s. 
Since for this particular choice of $\bar x$ the process $(M^\varphi_t)$  is a
 martingale under measure $P^{c,D}_{\bar x}$, thanks to
 \eqref{040309-20} we have, see e.g. \cite[VI Theorem 3.13, page 371]{Protter},
\begin{equation*}
%\label{Mfi}
M^\varphi_t=a-\varepsilon,\quad t\ge 0,\, P_{\bar x}^{c,D}\mbox{-a.s.}
\end{equation*}
From this and \eqref{phi} we get 
\begin{equation}
\label{eq3.cont1}
\hat u(X_{t}) \le a+\varepsilon,\quad t\in[0, \tau_D),\, P_{\bar x}^{c,D}\mbox{-a.s.}
\end{equation}
On the other hand, by the minorization condition $M(\bar c_D,
\psi_D^{\bar c_D}, \nu^{\bar c_D}_D)$, the assumptions made on
$\psi^{\bar c_D}_D$, $\nu^{\bar c_D}_D$, and \eqref{050309-20}, we conclude that for any $x\in D$
we have
\begin{align*}
\mathbb E^{c,D}_x\left[\int_0^{\tau_D} \mathbf{1}_{\{\hat u>a+2\varepsilon\}}(X_r)\,dr\right]&=R^{c,D}\mathbf{1}_{\{\hat u>a+2\varepsilon\}}(x)\ge R^{D}_{\bar c_D}\mathbf{1}_{\{\hat u>a+2\varepsilon\}}(x)
\\&\ge \psi_D^{\bar c_D}(x)\int_D \mathbf{1}_{\{\hat u>a+2\varepsilon\}}d \nu^{\bar c_D}_D>0.
\end{align*}
In the case of part 1) of the theorem  the positivity of the last
integral in the utmost right hand side follows
from  ${\rm supp}\,\nu^{\bar c_D} _D=D$ and  the continuity of $u$. In
the case of part 2) it is a consequence of  the
facts that $m( \hat u >a+2\varepsilon)>0$ and $m\ll \nu^{\bar c_D} _D$.
Hence, for every $x\in D$, we have
$$P_{x}^{c,D}\Big(\exists t\in[0,\tau_D):\,
\hat u(X_t)>a+2\varepsilon\Big)>0,\quad x\in D,
$$ 
which obviously contradicts (\ref{eq3.cont1}). Hence,
\eqref{012105-21} cannot be true and therefore  $\nu^{\bar c_D}_D(x:\,v^K(x)>0)>0$. This  ends
the proof of (\ref{eq.np1a}).\qed

\bigskip

\section{Minorization condition, irreducibility of the resolvent and strong maximum principle}
\label{mis.1}

Suppose that $m$ is a non-trivial, $\si$-finite Borel
measure in $D$. We say that the resolvent of $A$ is $m$-irreducible if
\begin{itemize}
\item[IR$_m$)]
for  any $f\in B_b^+(D)$ such that $\int_Df\, dm>0$  we have
\begin{equation}
\label{RD1}
R^D_1 f(x)>0\quad\mbox{for all } x\in D.
\end{equation}
\end{itemize}

 {
\begin{remark}
\label{rem.apx1}
Observe that for any $\si$-finite Borel measure $\nu$ on $D$ and non-negative Borel function $f$ on $D$ if 
$\int_D R^D_\alpha f\,d\nu>0$ for some $\alpha\ge 0$, then  $\int_D R^D_\alpha f\,d\nu>0$ for any $\alpha\ge 0$.
Indeed, it follows easily  that  the assumption made is equivalent to 
\begin{equation}
\label{eq.apx1}
\ell_1\Big(\big\{t\ge 0:\int_DP^D_tf\,d\nu>0\big\}\Big)>0.
\end{equation}
In particular if \eqref{RD1} holds, then $R^D_\al f\succ 0$ for all   $\al\ge 0$. 
\end{remark}}

 {
\begin{remark}
\label{rem.apx3}
Note that if the minorization condition $M(\alpha,\psi_D^\alpha,\nu^\alpha_D)$ holds for some $\alpha\ge 0$, strictly positive $\psi^\alpha_D$
and $\nu^\alpha_D$ satisfying $m\ll\nu^\alpha_D$, then the
$m$-irreducibility of $A$ follows.
\end{remark}}

%\begin{definition}
%\label{ref-meas}
%A $\sigma$-finite   Borel measure $\mu$ on $D$ is called a {\em reference measure}
%for the resolvent family $(R^D_\alpha)_{\alpha>0}$
%if $R^D_\alpha(x,\cdot)\ll\mu$ for any $x\in D$ and $\alpha>0$.
%\end{definition} 

Recall the definition of an irreducibility
measure for a non-negative kernel.
Given $\al>0$ we let $K_\al:D\times{\cal B}(D)\to[0,+\infty)$ be a
substochastic kernel defined by 
$$
K_\al(x,B):=\alpha R^D_\alpha1_B(x)\quad\text{ for
}\quad (x,B)\in D\times{\cal B}(D).
$$ 

\begin{definition}{(cf \cite[Section 2.2]{Num})}
\label{df011607-20}
Suppose that $\nu$ is  {a  non-trivial}  $\si$-finite Borel measure on $D$.
We say that
the kernel $K_\al$ is  $\nu$-irreducible and $\nu$ is then  called an {\em irreducibility measure} for $K_\al$, if
for any $B$ such that $\nu(B)>0$ and $x\in D$ we have $K^n_\al(x,B):=(\alpha
R_\alpha^D)^n1_B(x)>0$ for some $n\ge1$. 
 {The kernel is said to be irreducible if it is
  irreducible for some measure.}

A measure $\mu$ is
called a {\em maximal irreducibility measure} for $K_\al$ if it is an
irreducibility measure for the kernel and  any  irreducibility measure
$\nu$   satisfies $\nu\ll \mu$.
\end{definition}

%In some of our results we 
%shall need the following hypothesis.
% \begin{enumerate}
% \item[LSC)] Any $(P^D_t)$-excessive function is lower semicontinuous,
%   see Definition \ref{exces}.
% \end{enumerate}

%One easily checks that LSC) implies RM) (see e.g. \cite[page 197]{bg}).

It turns out that condition IR$_m$) suffices for the validity of
$M(\al, \psi_D^\al, \nu^\alpha_D)$ for any $\al\ge0$ with strictly positive function $\psi_D^\al$ and measure
$\nu^\alpha_D$ satisfying $m\ll \nu^\alpha_D$.

\begin{theorem}
\label{th3.hl.nuc1a}
Assume that IR$_m$)  holds and
\begin{equation}
\label{eq.equivmrm}
\mbox{for some (thus all) $\alpha\ge 0$  we have}\,\,\, mR^D_\alpha\ll
m.
\end{equation}
Then, for each
$\al\ge 0$ there exist an $(\al+1)$-excessive function
$\psi^\alpha_D:D\to(0,+\infty)$ and a measure $\nu^\alpha_D$
satisfying $m\ll \nu^\alpha_D$,
such that $M(\al, \psi_D^\al, \nu^\alpha_D)$ is in force.
\end{theorem} 
\begin{proof}
Suppose that $\al>0$.
Condition IR$_m$) implies that the
kernel 
$
K_\al(\cdot,\cdot)$ is {\em $m$-irreducible}.
 By \cite[part ii) of Proposition 2.4]{Num}
\begin{equation}
\label{eq4.mimf1}
\mu_\alpha(\cdot)= \sum_{n=1}^\infty \frac{1}{2^n}\int_D K^n_\al(x,\cdot)\,m(dx).
\end{equation}
is a maximal irreducibility measure for $K_\al$, and so $m\ll\mu_\alpha$.
Clearly $\mu_\alpha\sim \mu_\beta$ for any
$\alpha,\beta>0$, i.e. both measures have the same null sets. We
 let $\mu:=\mu_1$, so  $m\ll\mu$. Recall that
 \[
 (R^D_\alpha)^nf(x)=\int_0^\infty \frac{t^n}{n !}e^{-\alpha t}P^D_tf(x)\,dt,\quad x\in D,\, n\ge 1.
 \]
 From the above,  assumption $mR_\alpha^D\ll m$ and characterization \eqref{eq.apx1} we infer that  $m(R^D_\alpha)^n\ll m$
 for any $n\ge 1,\, \alpha\ge 0$. Thus, using \eqref{eq4.mimf1} we
 obtain $\mu\ll m$. Thus $\mu\sim m$ and  $m$ is a maximal irreducibility measure.

By \cite[Theorem 2.1]{Num} 
there exist a non-trivial with respect to $m$ (i.e. $\int_D\hat s_\alpha\,dm>0$) function  $\hat s_\alpha:D\rightarrow
[0,\infty)$ and a non-trivial  (i.e. $\hat\nu_\alpha(D)>0$) $\si$-finite Borel measure  $\hat\nu_\alpha$ on $D$,
 the so called {\em small function} and {\em small measure}, such that
  {\begin{equation}
\label{eq.sfsm1n}
\Big(R_{\alpha+1}^D\Big)^nf(x)\ge \hat s_\alpha(x)\int_D fd \hat\nu_{\al},\quad f\in B^+(D),\quad x\in D.
\end{equation}
We claim that in fact from \eqref{eq.sfsm1n} it follows that 
\begin{equation}
\label{eq.sfsm1}
R_{\alpha}^D  f(x)\ge R_{\alpha+1}^Df(x)\ge \hat s_\alpha(x)\int_D fd \hat\nu_{\al},\quad f\in B^+(D),\quad x\in D.
\end{equation}
Indeed, we obviously have $R_{\alpha+1}^Df\le R_{\alpha}^Df$ for  any $f\in B^+(D)$.
By the resolvent identity, see \eqref{res-id},  
\begin{equation}
\label{010310-20} R_{\alpha}^D
R_{\alpha+1}^D f=R_{\alpha+1}^D
R_{\alpha}^D f =R_{\alpha}^Df-
R_{\alpha+1}^D f\le R^D_{\alpha} f \quad \mbox{for any $f\in B^+(D)$}.
\end{equation} 
If $n\ge2$ in \eqref{eq.sfsm1n}, then we can write
\begin{equation*}
%\label{eq.sfsm1n1}
\begin{split}
&
R_{\alpha}^D \Big(R_{\alpha+1}^D\Big)^{n-2}f(x)\ge \Big(R_{\alpha}^D
R_{\alpha+1}^D\Big)\Big(R_{\alpha+1}^D\Big)^{n-2}f(x)\\
&
\ge \Big(R_{\alpha+1}^D\Big)^nf(x)\ge \hat s_\alpha(x)\int_D fd \hat\nu_{\al},\quad f\in B^+(D),\quad x\in D.
\end{split}
\end{equation*}
Iterating this procedure, we conclude \eqref{eq.sfsm1}.}
%\begin{equation}
%\label{eq.sfsm1}
%R_\alpha^Df(x)\ge \hat s_\alpha (x)\int_D fd \hat\nu_D^\alpha,\quad f\in B^+(D),\quad x\in D.
%\end{equation}
By \eqref{010310-20}, $ R_{\alpha}^D
R_{\alpha+1}^D f\le R^D_\alpha f$. Therefore, applying \eqref{eq.sfsm1}
for $R_{\alpha+1}^D f$ in place of $f$ we get
\begin{equation}
\label{eq.sfsm2}
R_\alpha ^Df(x)\ge   \hat s_\alpha (x)
\int_Df d\nu_D^\al,\quad f\in B^+(D),\quad x\in D,
\end{equation}
 where $\nu_D^\al(B):= \int_BR^D_{\alpha+1}1_Bd \hat\nu_\alpha$,
 $B\in{\cal B}(D)$. Since $\hat\nu_\alpha$
 is non-trivial, we get by IR$_m$) that $m\ll \nu_D^\al$.
Applying $R_{\alpha+1}^D$ to both sides of  \eqref{eq.sfsm2}
 we get \eqref{RDa} with $\psi_D^\al:=R_{\alpha+1}^D\hat s_\alpha $.  Note that $\psi_D^\al$ is an $(\al+1)$-excessive function with respect to $(P^D_t)$.
By hypothesis IR$_m$)  we conclude  that $\psi_D^\al$ is strictly positive on
$D$. 
% Inequality (\ref{RDa}) implies then that $\nu_D^\alpha$ is an irreducibility measure for $K_\al$, so
%$\nu_D^\alpha\ll \mu$ (by maximality of $\mu$). On the other hand, since $\mu$ is an irreducibility measure for $K$, if $\mu(B)>0$, then $R^D_{\alpha+1}\mathbf{1}_B(x)>0,\, x\in D$.
%Thus, $\nu^\alpha_D(B)=\int_DR^D_{\alpha+1}\mathbf{1}_B(x) \hat
%\nu_\alpha(dx)>0$. Consequently, $\nu_D^\alpha \sim\mu$ for each
%$\alpha>0$. In particular we also have $m\ll \nu^{\alpha}_D$. 
\end{proof}

%(\teka{ten wniosek pierwotnie  byl po Remark 4.7, 4.8})

From Theorems \ref{th3.hl.nuc1a} and \ref{th3.hl.nuc1} 
we immediately conclude the following.
\begin{corollary}
\label{cor011607-20}
Suppose  that the operator $A$ satisfies the assumptions of Theorem
  \ref{th3.hl.nuc1a}. Then, there exists a function
$\psi:D\rightarrow (0,\infty)$ such that for any $u\in\mathcal U^+_c$   non-constant $m$-a.e.   in $D$,
we can find $a>0$,  for which
\begin{equation}
\label{010104-20av1a}
\bar u_{D_S }-u(x)\ge a  \psi(x),\quad x\in  D.
\end{equation}
% Here   $\hat x$  satisfies \eqref{012810-20}.
\end{corollary}

{
\begin{remark}
Observe that IR$_m$) and \eqref{eq.equivmrm} together imply that $m\sim mR^D_\alpha$
for any $\alpha\ge 0$.
\end{remark}}

\begin{remark}
{Consider the following condition}
\begin{equation}
\label{bdd-a}
{\mbox{for some (thus all) $\alpha\ge 0$  we have}\,\,\, m\ll mR^D_\alpha.}
\end{equation}
Condition  \eqref{bdd-a}   holds in particular
when  the measure $m$ 
is   excessive.  Indeed, then $\alpha m R_\al^D\le m$ for all  $\al>0$ and, 
according to   \cite[Section 37b]{DM}, we can write   
\begin{equation}
\label{bdd-2}
\lim_{\al\to+\infty}\al\int_DR_{\al}^Dfdm=\int_Dfdm,\quad
\mbox{for any }f\in B_b^+(D),
\end{equation}
which implies that  {$m\ll mR^D_\alpha$}.
 {Obviously,  by the excessiveness of $m$, we have $m R^D_\alpha\ll m$.
Thus, in fact, the excessiveness of $m$ implies that $m\sim mR^D_\alpha$ for any $\alpha \ge 0$.}
\end{remark}
{
\begin{remark}
\label{rem.apx2}
It is easy to see that \eqref{bdd-a} also holds  if
\begin{equation}
\label{bdd}
\liminf_{t\to0+}\int_DP_t^Dfdm>0,\quad \mbox{for any }f\in
B_b^+(D) ,\quad\mbox{such that }\int_Dfdm>0.
\end{equation}
\end{remark}}

In our next result we    show a form of the converse to the 
result of Theorem \ref{th3.hl.nuc1a}. Namely, we have the following.

\begin{theorem}
\label{prop.apx1}
Assume that operator $A$, domain $D$ and an excessive measure $m$  are fixed.
Suppose furthermore that     the conclusion of Corollary
\ref{cor011607-20} holds for some  $c\ge 0$ and any $u\in
{\cal U}_c^+$.
%  there exists a Borel measurable function $\psi:D\to (0,\infty)$
% such that for any $u\in\mathcal U^+_c$ there exists $a>0$ for which
% \eqref{010104-20av1a} is in force.
% \[
% \bar u_{D_S}-u(x)\ge a\psi_D(x),\quad x\in D.
% \]
Then, the operator $A$ satisfies condition IR$_m$).
\end{theorem}

%\textcolor{red}{\bf DOTAD}

\begin{proof}
 Suppose that $f\in B_b^+(D)$ and $\int_D f\,dm>0$. By ET)  (see \cite[Proposition 2.2]{BCR}), there exists a strictly positive $h\in B_b(D)$
such that $R^Dh$ is bounded by a constant $m$-a.e. Set $u_n:=
-R^D(f\wedge (nh))$ for $n\ge1$. Directly from the definition one can
see that  $u_n$ is a weak subsolution to \eqref{eq3.1ab}, with the
given 
$c$.
If for some $n\ge1$ we have $\bar{u}_{n,D_S}<0$, then, clearly, $R^Df(x)>0,\, x\in D$.

Suppose, therefore that 
$\bar{u}_{n,D_S}=0$ for all $n\ge1$.  Note that then $R^D(f\wedge
(nh)) $ is
non-constant $m$-a.e. for some sufficiently large $n\ge 1$. Indeed, if otherwise we
would have 
$R^D(f\wedge (nh))\equiv 0$, $m$ a.e. for all $n\ge1$. By virtue of
Remark \ref{rem.apx1} then $R^D_\al(f\wedge (nh)) \equiv 0$  for
all $\al\ge0$ and $n\ge1$. From the fact that $m$ is excessive
 we     conclude, using \eqref{bdd-2}, that
\begin{equation*}
%\label{bdd-22}
0=\lim_{\al\to+\infty}\al\int_D R^D_\al(f\wedge (nh)) dm=\int_D
\big(f\wedge (nh)\big) dm,\quad n\ge1.
\end{equation*}
Letting $n\to+\infty$ leads to a contradiction with the assumption
that $f$ is non-trivial.

%\green{\em T.K.: I do not understand why  we need $h$?}

Since $u_n$ is
non-constant for some $n$ we can 
use estimate \eqref{010104-20av1a} (remember that
$\bar{u}_{n,D_S}=0$). There exists $a_n>0$ such that 
\[
R^D(f\wedge{nh})(x)=-u_n(x)\ge a_n \psi_D(x),\quad x\in D.
\]
Thus, again $R^Df(x)>0,\, x\in D$. By Remark \ref{rem.apx1}, we
conclude that the latter implies in fact \eqref{RD1}.
\end{proof}

%\green{\em T.K.: I am confused with this remark...How $c=1$ differs
 % from any other $c$.}

\begin{remark}
The conclusion of Theorem \ref{prop.apx1}   is still in force, if we assume
that  estimate \eqref{010104-20av1a} holds for any    $u\in
{\cal U}_1^+$ and, instead of the excessiveness of $m$,  we require  
the weaker
condition \eqref{bdd-a}.
 Indeed,
suppose that $f\in B_b^+(D)$ is such that $\int_Dfdm>0$. Then   $u:=-R^D_1f$
 is a weak subsolution to \eqref{eq3.1ab} with $c\equiv 1$. If
$\bar{u}_{D_S}<0$, then obviously $R^D_1f(x)>0$ in $D$. Thanks
to \eqref{010104-20av1a}  the same conclusion holds, if $R^D_1f$ is  not constant. If the latter does not hold, then the
only possibility for  $R^D_1f(x)$ to admit zero value is if it 
identically vanishes, but this would  contradict \eqref{bdd-a}.
\end{remark}

\begin{definition}[Strong maximum principle]
\label{SMPDa}
We say that the operator $-A+c$, with $A$ given by \eqref{A},
satisfies the strong maximum principle with respect to measure $m$ (SMP$_m$) if for any weak
  subsolution $u$ to
 \eqref{eq3.1ab} on $D$, for which $u(\hat x)=\bar u_{D_S }\ge 0$
at some $\hat x\in   D$ we have   $u\equiv \bar u_{D_S }$,  $m$-a.e. in $D$.
\end{definition}

\begin{theorem}
\label{prop012904-20v2}
 The 
hypothesis IR$_m$) implies 
the validity of the SMP$_m$ for the operator $-A+c$ with   any $c$
satisfying A3). 

Conversely, if we assume that \eqref{bdd-a}
and  the SMP$_m$    for   $-A+c$, with   $c\equiv 1$
  are in force, then IR$_m$) holds.
\end{theorem}
%The proof of the result is given in Section \ref{SMP}.
\begin{proof}
Assume condition IR$_m$). Let $u$ be a  weak
  subsolution  to
 \eqref{eq3.1ab} in $D$ such that $M:=\bar u_{D_S }\ge0$. Let
 $u_M:=M-u$. From  the fact that $M\ge 0$ and $u$ is a weak
 subsolution we conclude, via \eqref{subsol.g}, that $u_M$ satisfies
\[
u_M(x)\ge \mathbb E_x\left[e_c(t\wedge \tau_D)u_M(X_{t\wedge\tau_D})\right],\quad x\in D,\, t\ge 0.
\] 
Since $c$ is bounded, $e_c(t\wedge \tau_D)$ is strictly positive. 
% Moreover,
% by Lemma \ref{lm.lm.151120}, $u_M(X_{t\wedge \tau_D})\ge 0,\, t\ge 0$.
Suppose that there exists $x_0\in D$ such that $u(x_0)=M$. Then $u_M(x_0)=0.$
Therefore, by the above inequality and strict positivity of
$e_c(t\wedge \tau_D)$ we have
$\mathbb E_{x_0}u_M(X_{t\wedge \tau_D})=0$ for any  $t\ge 0$.
Thus,
\[
P^D_tu_M(x_0)\le \mathbb E_{x_0}u_M(X_{t\wedge \tau_D})=0,\quad t\ge 0
\]
and, as a result, $R^D_1u_M(x_0)=0$. Thanks to the irreducibility assumption IR$_m$) we have $u_M\equiv 0$, $m$-a.e.  on
$D$. Therefore $u\equiv M$, $m$-a.e.  on $D$ and the SMP$_m$ holds (cf Definition
\ref{SMPDa}).

Conversely, assume the SMP$_m$ and  \eqref{bdd-a}. We show that IR$_m$) holds.  We shall argue by contradiction. Let 
\begin{equation}
\label{011401-21}
f\in
B_b^+(D)\quad\mbox{ and }\quad  \int_Dfdm>0.
\end{equation}  
Then  $u:= -R^D_1f$  is a weak subsolution to (\ref{eq3.1a}), with
$c\equiv 1$.

 Assume that for some $x_0\in D$ we have $u(x_0)=0$.
 Thus,
$\bar u_{D_S} =0$ and
by the SMP$_m$ we have $u\equiv 0$,
$m$-a.e., which in turn implies that 
  $R^D_\al f\equiv 0$, $m$-a.e. for
all $\al\ge 0$. 
In light of \eqref{011401-21} this clearly
contradicts   \eqref{bdd-a}.
We conclude therefore that $R^D_1f\succ 0$ in  $D$, thus  IR$_m$) holds.
\end{proof}

{
\begin{remark}
If we assume  in Theorem \ref{prop012904-20v2} a {stronger condition} that $m$ is excessive instead of \eqref{bdd-a},
then, by  the argument used in the proof of Theorem
\ref{prop.apx1}, we conclude that the fact that SMP$_m$ holds for $-A+c$ with some  $c$ satisfying A3) implies IR$_m$) 
\end{remark}

%\textcolor{red}{\bf DOTAD}

\section{Relationship between the minorization condition and intrinsic ultracontractivity}
 
\label{disc2}

The concept of the intrinsic ultracontractivity has been
introduced for symmetric operators by Davies and Simon in \cite{DaS}
and later extended to the non-symmetric case by  Kim and Song in
\cite{KiS}, see also \cite{KnP}. 
We assume that $m$ is a  finite Borel measure on $D$ and sub-Markovian 
semigroups $\big(P^D_t\big)_{t\ge0}$, $\big(\hat P^D_t\big)_{t\ge0}$,
given by 
\begin{equation*}
\begin{split}
&P^D_tf(x):=\int_Dp_D(t,x,y)f(y)m(dy),\quad \\
&
\hat
P^D_tf(x):=\int_Dp_D(t,y,x)f(y)m(dy),\quad f\in L^\infty(D,m),
\end{split}
\end{equation*}
extend to   strongly continuous semigroups on $L^2(D,m)$.
Here   $(x,y)\mapsto p_D(t,x,
y)$, $(x,y)\in  D\times D$ are bounded, continuous and strictly
positive for each $t>0$. It follows
then, see Theorem V.6.6 on page 337 of \cite{schaefer}, see also
Proposition 2.3 of \cite{KiS}, that there
exist the principal eigenvalue $\la_D>0$ and strictly positive,
continuous   eigenfunctions
$\varphi_D,\, \hat{\varphi}_D:D\to(0,+\infty)$ belonging to $L^\infty(D,m)$ for both $P^D_t$ and
$\hat P^D_t$, i.e.  
\begin{equation*}
%\label{eigen1}
e^{-\lambda_D t}\varphi_D(x)=\int_Dp_D(t,x,y) \varphi_D(y)
m(dy),\quad t\ge 0,\,x\in D
\end{equation*}
and
\begin{equation*}
%\label{eigen2}
e^{-\lambda_D t}\hat{\varphi}_D(x)=\int_Dp_D(t,y,x) \hat{\varphi}_D(y)
m(dy),\quad t\ge 0,\,x\in D.
\end{equation*}
We normalize $\varphi_D$ and $\hat\varphi_D$ by letting  $\int_D\varphi_Ddm=\int_D\hat\varphi_Ddm=1$.

We say that $(P^D_t)_{t\ge 0}$ is {\em  intrinsically ultracontractive}, if 
for each $t>0$ there exists 
constant $c_t > 0$ such that
\begin{equation}
\label{011105-20}
p_D(t,x,y)\le c_t \varphi_D(x) \hat{\varphi}_D(y)\quad\mbox{for any }(x,y)\in D\times D.
\end{equation}
It is well known (see e.g.   \cite[Proposition 2.5]{KiS}), that condition
\eqref{011105-20} implies  the existence of a continuous function
$\tilde c:[0,+\infty)\to(0,+\infty)$ such that
\begin{equation*}
%\label{011105-20a}
p_D(t,x,y)\ge \tilde c(t) \varphi_D(x) \hat{\varphi}_D(y)\quad\mbox{for any }(x,y)\in D\times D.
\end{equation*}
This estimate immediately implies the following.  
\begin{theorem}
\label{prop5.1}
Assume that $(P^D_t)_{t\ge 0}$ is intrinsically ultracontractive. Then for any $\al>0$
condition $M(\al, \psi_D^\alpha, \nu^\alpha_D)$ holds, with 
$$
\psi_D^\alpha:=\varphi_D,\quad \nu_D^\alpha(dx):=\left(\int_0^{+\infty}\tilde c(s)e^{-\alpha s}ds\right)\hat\varphi_D(x)m(dx).
$$
\end{theorem}

\bigskip

%From Corollary \ref{cor010807-20} and Proposition \ref{prop5.1} we
%immediately conclude the following.
%\begin{corollary}
%\label{cor5.1}
%Under the assumptions of Proposition \ref{prop5.1} the conclusions made 
%in  parts 2) and 3) of Corollary \ref{cor010807-20} remain valid.
%\end{corollary}

There exists  a vast literature on the intrinsic ultracontractivity of
L\'evy type operators, see e.g. \cite{CW,KiS} and the references therein.
Recall   that if $(P_t)_{t\ge 0}$ is a symmetric L\'evy semigroup,
i.e.   for all $x\in D$ we have
$\mathbf{Q}(x)\equiv \mathbf{Q}(0)$, $ b\equiv 0$ and  $N(x,\,dy)\equiv
N(0,\,dy)$ is symmetric, then the full support 
of $N(0,\,dy)$ implies ultracontractivity of $(P^D_t)_{t\ge 0}$ for any bounded domain $D\subset \BR^d$ (see e.g. \cite{G}).
Observe also that ultracontractivity
of $(P^D_t)_{t\ge 0}$ implies that 
$
P^D_t1\sim \varphi_D$ on  $D$ for each $t>0$.
There are many results concerning the behavior of $P^D_t1$ (see
e.g. \cite{CS,GKK}) that state in particular that $P_t^D1\sim
\phi(\delta_D)$ on $D$ for some  function $\phi$, which can be
determined from the coefficients of  $A$ and the domain $D$. For
  this type of a result some sufficient regularity  of the domain   is
usually required, e.g. of $C^{1,1}$ class, see e.g. \cite{GKK}.
This in turn implies that
\begin{equation}
\label{sal.sal1}
\varphi_D\sim \phi(\delta_D)\quad\mbox{on}\quad D.
\end{equation}
For example, if $A=\Delta^{s/2}$ for some $s\in (0,2)$, i.e.
\begin{equation}
\label{Sal}
Au(x)=\lim_{h\searrow
  0}\int_{B^c(0,h)}[u(x+y)-u(x)]\frac{dy}{|y|^{s+d}},\quad u\in
C^2(\bbR^d)\cap C_b(\bbR^d),\,x\in\BR^d,
\end{equation}
then $\phi(t)=t^{s/2},\, t\ge 0$ for a sufficiently regular domain $D$, see e.g. \cite{Kulczycki}.

\bigskip

Therefore, directly from Theorems  \ref{th3.hl.nuc1} and \ref{prop5.1},   we
conclude the following version of the Hopf lemma.
\begin{corollary}
\label{cor010907-20}
Suppose that   \eqref{sal.sal1} holds and the assumptions of Theorem
\ref{prop5.1} are in force. 
Let $u$ be a non-constant $\ell_d$-a.e.  weak subsolution to (\ref{eq3.1ab}). 
Then  there exists $a>0$ such that for any $\hat x\in   \partial D$    with  $u(\hat
x)=\bar u_{D_S }\ge0$  we have 
\begin{equation}
\label{010104-20av1cor}
\mathop{\liminf_{x\to \hat x}}_{x\in D}\frac{u(\hat x)-u(x)}{\phi(\delta_D(x))}\ge a.
\end{equation}
\end{corollary}

\begin{remark}
In particular, if  $D$ satisfies the interior ball condition and ${\bf
  n}(\hat x)$ is the normal outward vector  at $\hat x$, then for sufficiently small $h>0$,
\[
\frac{u(\hat x)-u(\hat x-h {\bf n} (\hat x))}{\phi(h)}\sim\frac{u(\hat x)-u(\hat x-h {\bf n} (\hat x))}{\phi(\delta_D(\hat x-h {\bf n} (\hat x)))}.
\]
So, by Corollary \ref{cor010907-20}, the $\phi$-normal derivative at a
maximal point $\hat x$, defined as
\[
\partial^\phi_{\bf n}u(\hat x):= \liminf_{h\to 0^+}\frac{u(\hat x)-u(\hat x-h {\bf n} (\hat x))}{\phi(h)},
\]
is strictly positive.%, if $c$ is non-negative, or bounded from below by $a u(\hat
%x)$, if in addition $\int_Dc(x)dx>0$.
\end{remark}

\begin{remark}
\label{rm5.4}
  Estimate   \eqref{010104-20av1cor}
is an analogue of the classical Hopf lemma.  As we have mentioned in
the foregoing, it does require,
as in the classical case,  regularity of the domain. Recall that the classical
Hopf lemma for the Laplace operator does not hold in general,  even for $C^{1}$-regular
domains. 
However, in many applications 
estimates of the type   \eqref{010104-20av1} with $\psi^{\bar c_D}_D=\varphi_D$ are quite sufficient. 
In such a situation there is no need for a  strong regularity
assumption on $D$, as in the case of estimates 
\eqref{010104-20av1cor}.
For many    operators $A$ regularity
assumptions on $D$ are only  required  to characterize the behaviour
of the principal eigenfunction near  the boundary of the domain.
More specifically, if $A=\Delta^{s/2}$ for some $s\in (0,2)$, then for
any bounded domain we have  \eqref{010104-20av1} with $\psi^{\bar c_D}_D=\varphi_D$, since
it is well known (see e.g. \cite[Theorem 1]{CW}) that $(P^D_t)_{t\ge 0}$ is intrinsic ultracontractive, regardless of the regularity of $D$.
On the other hand even for  Lipschitz domains $D$ it is not true that
$\varphi_D\sim \delta_D^{s/2}$. So, we cannot expect \eqref{010104-20av1cor} to hold for a non-regular $D$.
\end{remark}

%\subsection{Minorization by a non-negative eigenfunction and
  %  $\varphi_D$-Hopf lemmas}

%Consider the case when $\psi^\alpha_D$   in \eqref{RDa} equals
%$\varphi_D$ - a non-negative eigenfunction for $A$. More precisely, we assume
%the following version of the minorization condition.
%\begin{definition}[Minorization
%condition $M'(\al, \nu^\alpha_D)$]
%\label{minorization1}
%{Suppose
%that $\al\ge0$ and  $\nu^\alpha_D$ is a non-trivial 
%Borel measure  on $D$. %\textcolor{red}{with full support.} 
%We say that the minorization
%condition $M'(\al, \nu^\alpha_D)$ is satisfied if 
%Eig) holds and
%\begin{equation}
%\label{RDap}
%R^D_\alpha f(x)\ge \varphi_D(x) \int_Dfd\nu_D^\alpha,\quad x\in
%D,\,f\in B_b^+(D).
%\end{equation}}
%\end{definition}

\section{
Minorization condition with bottom function as an eigenfunction
   and ergodic properties of the canonical
process}
\label{disc}
 {In the previous section, we have  proved that intrinsic ultracontractivity
of the semigroup $(P^D_t)$ implies $M(\alpha,\psi_D^\alpha,\nu_D^\alpha)$ with $\psi^\alpha_D=\varphi_D$,}
where $\varphi_D$ is the respective principal eigenfunction.
Here we discuss the relationship between the
minorization condition   
and  ergodic properties of the canonical process.    It turns out, see Theorem
\ref{eq22.th1} below, that under the hypotheses of irreducibility (condition IR$_m$)) and
 finiteness of the exit time of the canonical process (condition ET)),
the validity of $M(\alpha,\varphi_D, \nu^{\alpha}_D)$ is equivalent with the property of uniform
ergodicity of the resolvent family corresponding to the
$\varphi_D$-process obtained from the canonical
process via the $\varphi_D$-Doob transform, see Definition \ref{df020211-20}.
We also discuss a relation between uniform conditional ergodicity of the canonical process and the
minorization condition. Recall that $m$ is some non-trivial, $\si$-finite Borel measure on $D$.

%\subsubsection{Non-negative eigenfunction and eigenvalue}
%Using the transition semigroup we can define  a non-negative
%eigenvalue and  eigenfunction for the operator   $A$. 
\begin{definition}
A function $\varphi_D$ and $\lambda_D$  are called   a {\em non-negative
eigenfunction} and {\em   eigenvalue} for $A$, see
e.g. \cite{BNV}, if $\varphi_D:D\to[0,+\infty)$,  $\lambda_D\ge 0$,
\begin{equation}
\label{eigen}
e^{-\lambda_D t}\varphi_D(x)=P^D_t\varphi_D(x),\quad t\ge 0,\,x\in D
\end{equation}
and  $\int_D \varphi_Ddm>0$.
\end{definition} 

In some of our results we shall assume the condition.
\begin{itemize}
\item[Eig)]
The operator $A$ possesses a non-negative
eigenfunction $\varphi_D$  and eigenvalue  $\lambda_D$.
\end{itemize}

 {Observe that $\varphi_D$ is an excessive function with respect to $(P^D_t)$.}

\begin{remark}
Obviously hypothesis IR$_m$) implies that  $\varphi_D(x)>0$ for all
$x\in D$.   Under some additional
assumptions $\la_D$
could be  a unique eigenvalue with a positive eigenfunction. If,   it
is indeed simple, then   $\varphi_D$ can be determined, up
to a multiplicative constant.  In such a situation the pair $(\la_D,
\varphi_D)$  is  
called  the {\em principal pair} while $\la_D$, $\varphi_D$ are  
referred to as
the principal eigenvalue and eigenfunction for $A$, respectively.
 {A discussion on  the validity of
Eig) is carried out in  Section \ref{sub.sub.uewo4.2}  below. }
% In fact $\varphi_D$ is
% determined uniquely,  up to a multiplicative constant,  by \eqref{eigen}  among positive functions.
\end{remark}

Throughout this section we suppose the irreducibility condition $IR_m)$ and the existence of a
non-negative eigenfunction $\varphi_D$ (condition Eig)). Then, as we
have already mentioned, 
$\varphi_D\succ 0$ in $ D$.  Consider    $(P^{D, \varphi_D}_t)$ - the  transition probability
semigroup of  the   conservative $\varphi_D$-process (see \cite[Chapter
11]{chung}), defined by 
\begin{equation}
\label{Ptphi}
P^{D, \varphi_D}_t f(x) :=e^{\la_D t}\frac{P^D_t(f
  \varphi_D)(x)}{\varphi_D(x)},\quad f\in B_b(D),\,x\in D.
\end{equation}
Let $(R^{D, \varphi_D}_\alpha)_{\al>0}$ be the respective resolvent family.

\subsection{Uniform ergodicity of a Markov
  process and its resolvent}

Let us recall  the notion of the uniform ergodicity of a Markov
  process, see 
  \cite[p. 1675]{tweedie}.
\begin{definition} 
Suppose that $(X_t)_{t\ge0}$ is an $E$-valued Markov process with the transition
probabilities $P_t(x,\cdot)$, $t\ge0$, $x\in E$. We say that its
transition semigroup $(P_t)_{t\ge0}$ is {\em uniformly ergodic} if there exists 
 $\Pi_{\rm inv}$ - an invariant
Borel 
probability measure on $E$, constants  
$C>0$ and
$\rho\in(0,1)$ such that, see \eqref{TV},
$$
\sup_{x\in E}\|P_t(x,\cdot)-\Pi_{\rm inv}\|_{\rm TV}\le C \rho^t,\quad
t\ge0.
$$
An analogous definition can be formulated for a discrete time Markov chain.
\end{definition}

Given $\al>0$ and a Markov process $(X_t)_{t\ge0}$, with the
respective resolvent family $(R_{\al})_{\al>0}$, consider the transition probability kernel
\begin{equation}
\label{kalb}
K_\al(x,B):=\al R_\alpha1_B(x),\quad x\in E,\,B\in
{\cal B}(E).
\end{equation}
We suppose that a  Markov chain $(\tilde X_n^{(\al)})_{n\ge0}$ has
the transition probability given by \eqref{kalb}.

\begin{definition} 
\label{df020211-20}
We say that the resolvent family $(R_{\al})_{\al>0}$ is uniformly ergodic if the
  chain $(\tilde X_n^{(\al)})_{n\ge0} $ is uniformly ergodic
  for any $\al>0$.
\end{definition}

\subsection{Minorization condition and  uniform ergodicity of the
  resolvent}

It turns out that, if $(X_t)_{t\ge0}$ is irreducible and $\varphi_D$
is bounded, then  the
minorization condition $M(\alpha,\varphi_D,\nu^\alpha_D)$ is equivalent
with the uniform ergodicity of the resolvent of the
$\varphi_D$-process. 
% This
% characterization of the minorization  is the
% content of our next result. 
\begin{theorem}
\label{eq22.th1}
Assume that IR$_m$) and Eig) hold,  the eigenfunction $\varphi_D$ of
Eig) is   bounded,  and $mR_\alpha^D \ll m$ for some $\alpha\ge 0$.  Then, condition $M(\al,\varphi_D, \nu^\alpha_D)$ is satisfied with non-trivial measure $\nu^\alpha_D$ for any
$\alpha\ge 0$  if and only if $(R^{D,\varphi_D}_{\al})_{\al>0}$ is
uniformly  ergodic. Moreover,  if $(P^{D,\varphi_D}_t)_{t\ge 0}$ is uniformly ergodic,
then $M(\al,\varphi_D,\nu^\alpha_D)$ holds for any $\alpha\ge 0$ with $m\ll\nu^{\alpha}_D$ .
\end{theorem}
%The result is shown in Section \ref{sec9.11}.
\begin{proof}

Suppose first that the $\varphi_D$-process $(X_t)_{t\ge 0}$ is
uniformly ergodic with $\Pi$ the unique invariant probability measure. Then for every $f\in B^+_b(D)$ there exists $t_f>0$ such that
\[
P^{D,\varphi_D}_tf\ge \frac12\int_D f\,d\Pi,\quad t\ge t_{f}.
\]
From the definition \eqref{Ptphi} we conclude
\[
P^{D}_tf\ge \frac{\varphi_D e^{-\lambda_D t}}{2\|\varphi_D\|_\infty}\int_Df\,d\Pi,\quad t\ge t_{f}.
\]
If $w$ is a bounded, $\alpha$-excessive function, with respect to
$(P^D_t)_{t\ge 0}$, then there exists $t_w>0$ such that
\[
w\ge e^{-\alpha t}P^D_tw\ge  \frac{\varphi_D e^{-(\lambda_D+\alpha) t}}{2\|\varphi_D\|_\infty}\int_Dw\,d\Pi,
\quad t\ge t_w.
\]
Therefore, for any $\alpha$-excessive non-trivial $m$-a.e. function $w$ one can
find a constant $c_w>0$ such that 
\begin{equation}
\label{w-cw}
w\ge c_w\varphi_D.
\end{equation}
Using Theorem \ref{th3.hl.nuc1a}, we   conclude the existence of an
$\al+1$-excessive 
function $\psi_D^\al$  and measure $\nu^{\al}_D$, that dominates the
Lebesgue measure $m$ such that \eqref{RDa} holds.
Combining this fact with the argument presented in the foregoing we conclude
that there exists $c>0$ such that $\psi_D^\al\ge c\varphi_D$, and $M(\al,\varphi_D,\nu^\alpha_D)$ follows.

Suppose now that $(R_{\beta}^{D,\varphi_D})_{\beta>0}$ is uniformly
 ergodic. Fix any $\al_0>\la_D+1$. We establish \eqref{RDa} with $\psi_D^\alpha=\varphi_D$ for any $\al\in [0, \al_0-\la_D-1)$. This obviously
 would imply the validity of the condition for any $\al\ge0$.

Given $\al\in [0, \al_0-\la_D-1)$ choose  $\psi_D^\al$ and
$ \nu^{\al}_D$  satisfying the conclusion of Theorem
\ref{th3.hl.nuc1a}. In particular, $\psi_D^\al$ is $\al+1$-excessive and  $m\ll \nu^{\al}_D$.
Observe that 
% \marginpar{{Cor.}}
 \begin{equation}
\label{RDal}
 R^{D,\varphi_D}_\beta f=\varphi_D^{-1}R^D_{\beta-\lambda_D}(f \varphi_D),\quad \beta\ge \la_D,\, f\in B^+(D).
 \end{equation}
 Using the uniform ergodicity of the Markov chain corresponding to the
 kernel $\al_0R_{\al_0}^{D,\varphi_D}$ and
\eqref{RDal} for $\beta=\al_0$ we conclude, as in the foregoing, that  there exists  $\tilde
\Pi^{(\al_0)}$ - a probability measure - such that for any  $f\in
B^+_b(D)$ one can find $n_f>0$ for which
\[
\left(R^D_{\alpha_0-\lambda_D}\right)^n f \ge \frac{\varphi_D}{2 \al_0^n\|\varphi_D\|_\infty}\int_D f\, d \tilde
\Pi^{(\al_0)},\quad n\ge n_{f}.
\]
Hence, if $w$ is any non-trivial $m$-a.e.  $\al+1$-excessive function  (w.r.t. $(P_t^D)$) we get that
there exists $n_w>0$ such that
\[
w \ge \frac{\varphi_D}{2\|\varphi_D\|_\infty}  \left(1-\frac{\lambda_D+\al+1}{\al_0}\right)^n\int_D w\, d \tilde
\Pi^{(\al_0)},\quad n\ge n_{w}
\]
and \eqref{w-cw} is in force. Thus, $\psi_D^\al\ge c\varphi_D$ for
some constant $c>0$.   Theorem
\ref{th3.hl.nuc1a} allows us to conclude then  $M(\alpha,\varphi_D,\nu^\alpha_D)$.

%%%%%%%COMPLETE ARGUMENT DO  NOT REMOVE%%%%%%%%%%%%%%%
% Suppose now that $(R_{\al}^{D,\varphi_D})_{\al>0}$ is uniformly
% ergodic. Given  $\al_0>\la_D$ there exists $n_f$ and $\tilde
% \Pi^{(\al_0)}$ - a probability measure - such that for any  $f\in B^+_b(D)$ there exists $n_f>0$, for which
% \[
% \left(\al_0 R^{D,\varphi_D}_{\al_0}\right)^n f\ge \frac12\int_D f\,d \tilde
% \Pi^{(\al_0)},\quad n\ge n_{f}.
% \]
% Observe that
% \marginpar{{Cor.}}
% \[
% R^{D,\varphi_D}_\alpha f=\varphi_D^{-1}R^D_{\alpha-\lambda_D}(f \varphi_D),\quad {\alpha\ge \la_D},\, f\in B^+(D).
% \]
% Therefore
% \[
% \left(R^D_{\alpha_0-\lambda_D}\right)^n f \ge \frac{\varphi_D}{2 \al_0^n\|\varphi_D\|_\infty}\int_D f\, d \tilde
% \Pi^{(\al_0)},\quad n\ge n_{f}.
% \]
% If $w$ is $\al$-excessive and $0<\al<\alpha_0-\lambda_D$ we get
% \[
% w \ge \frac{\varphi_D}{2 [\al_0 \left(\alpha_0-\lambda_D-\al\right)]^n\|\varphi_D\|_\infty}\int_D w\, d \tilde
% \Pi^{(\al_0)},\quad n\ge n_{f}
% \]
% and \eqref{w-cw} is in force. Another application of Theorem
% \ref{th3.hl.nuc1a} allows us to conclude the
% estimate \eqref{RDap}.
%%%%%%%THE END OF THE ARGUMENT%%%%%%%%%%%%%%%

Suppose now that $M(\al,\varphi_D, \nu^\alpha_D)$
holds. Then, by virtue of \eqref{RDal}, the measure  
$d\tilde\nu^\alpha_D:=\varphi_D d\nu^\alpha_D$ satisfies
\begin{equation*}
% \label{tRDa}
R^{D, \varphi_D}_{\alpha}f\ge   R^{D, \varphi_D}_{\alpha+\la_D}f\ge \int_D fd\tilde\nu^\alpha_D,\quad x\in D,\, f\in B_b^+(D),\,\al>0.
\end{equation*}
% \begin{equation}
%  \label{tRDa}
% \begin{split}
% &
% R^{D, \varphi_D}_{\alpha+\la_D}
% f(x):=\int_0^{\infty}e^{-(\alpha+\la_D) s}P^{D, \varphi_D}_s f(x)
% ds\\
% &
% = \frac{R^{D}_{\al} (f
%   \varphi_D)(x)}{\varphi_D(x)}\ge \int_D fd\tilde\nu^\alpha_D,\quad x\in D,\, f\in B_b^+(D).
% \end{split}
% \end{equation}
%} 
% This condition appears   in the definition of a pettite set, see e.g. \cite[Section
% 4]{MT2}. It guarantees, via
Using
 the Dobrushin theorem, see \cite[Theorem
2.3.1, p. 33]{kulik}, we conclude the uniform   ergodicity  in
the total variation norm   of the Markov chain $(\tilde
X_n^{(\al)})_{n\ge0}$ and the proof of Theorem \ref{eq22.th1} is
therefore concluded.
\end{proof}

 %\textcolor{red}{\bf DOTAD}

\subsection{Uniform conditional ergodicity and minorization condition
  $M'(\al, \nu^\alpha_D)$}

\begin{definition} 
The canonical process $(X_t)_{t\ge0}$ is called {\em uniformly conditionally
ergodic,}  cf
\cite[Section 2.2]{KnP}, if
there exists a (unique) probability Borel measure $\Pi$ on $D$, called {\em quasi-stationary distribution},
such that
\begin{equation}
\label{eq.uce1}
\lim _{t\rightarrow +\infty}\sup_{B\in\BB(D),x\in
  D}\Big|P_x\big(X_t\in B\big| \,t<\tau_D\big)-\Pi(B)\Big|=0.
\end{equation}
\end{definition}

It turns out that the minorization condition $M'(\al, \nu^\alpha_D)$ is 
a consequence of  the  uniform conditional
ergodicity of $(X_t)_{t\ge 0}$. This is a consequence of  Theorem
\ref{eq22.th1} and the following result.
\begin{proposition}
\label{prop010807-20}
Suppose that the assumptions of Theorem \ref{eq22.th1} are in
  force. If the canonical process $(X_t)_{t\ge 0}$ is uniformly
conditionally ergodic, then  $(P^{D,\varphi_D}_t)_{t\ge 0}$ is uniformly ergodic.
\end{proposition}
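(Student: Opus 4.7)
The strategy is to exhibit an invariant probability measure for $(P^{D,\varphi_D}_t)_{t\ge 0}$, transfer the uniform conditional convergence into a total-variation convergence statement for the $\varphi_D$-transformed semigroup, and then bootstrap to the exponential rate required by the paper's definition of uniform ergodicity. Since $\varphi_D>0$ on $D$ by IR$_m$) and $\Pi$ is a probability measure supported on $D$, the constant $Z:=\int_D\varphi_D\,d\Pi$ is finite and strictly positive. I claim $\mu:=Z^{-1}\varphi_D\,\Pi$ is invariant for $(P^{D,\varphi_D}_t)_{t\ge 0}$. In view of \eqref{Ptphi}, this claim reduces to the QSD eigenrelation $\Pi P^D_t=e^{-\lambda_D t}\Pi$, which I would derive from \eqref{eq.uce1} alone by applying $P^D_{s+t}=P^D_s P^D_t$ to the ratio $P^D_{s+t}\mathbf{1}_B(x)/P^D_{s+t}\mathbf{1}(x)$ and letting $s\to\infty$: since $P^D_t\mathbf{1}_B$ and $P^D_t\mathbf{1}$ are bounded, uniform conditional ergodicity applied to these test functions identifies the limit as $\Pi P^D_t(B)/\Pi P^D_t(D)$, while \eqref{eq.uce1} forces the same limit to equal $\Pi(B)$. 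Hence $\Pi P^D_t=c(t)\Pi$ with $c(t)>0$; the cocycle identity $c(s+t)=c(s)c(t)$ forces $c(t)=e^{-\theta t}$, and testing $\Pi P^D_t$ against $\varphi_D$, using $P^D_t\varphi_D=e^{-\lambda_D t}\varphi_D$ and $Z>0$, fixes $\theta=\lambda_D$. Invariance of $\mu$ is then a one-line computation from \eqref{Ptphi}.

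With the invariant measure in hand, a direct calculation from \eqref{Ptphi} and $P^D_t\varphi_D=e^{-\lambda_D t}\varphi_D$ yields the ratio representation
\begin{equation*}
P^{D,\varphi_D}_t f(x)=\frac{\E_x[f(X_t)\varphi_D(X_t)\mid t<\tau_D]}{\E_x[\varphi_D(X_t)\mid t<\tau_D]},\qquad x\in D.
\end{equation*}
Since $\varphi_D$ and $f\varphi_D$ are bounded, \eqref{eq.uce1} gives uniform convergence of the numerator to $\int f\varphi_D\,d\Pi$ and of the denominator to $Z$, uniformly in $x\in D$ and in $f$ with $\|f\|_\infty\le 1$. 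The denominator is bounded below by $Z/2$ for $t$ large, so the elementary bound $|a/b-c/d|\le|a-c|/|b|+|c||b-d|/(|b||d|)$ produces $\sup_{x\in D}\|P^{D,\varphi_D}_t(x,\cdot)-\mu\|_{\rm TV}\to 0$.

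For the final step, pick $t_0$ so that $\sup_{x}\|P^{D,\varphi_D}_{t_0}(x,\cdot)-\mu\|_{\rm TV}<1$; the triangle inequality then yields $\sup_{x,y}\|P^{D,\varphi_D}_{t_0}(x,\cdot)-P^{D,\varphi_D}_{t_0}(y,\cdot)\|_{\rm TV}<2$, i.e., the Dobrushin contraction coefficient of $P^{D,\varphi_D}_{t_0}$ is strictly less than one. Iterating along $t=nt_0+r$ with $0\le r<t_0$, combined with invariance of $\mu$ and the Markov contraction $\|P^{D,\varphi_D}_r g\|_\infty\le\|g\|_\infty$, gives $\|P^{D,\varphi_D}_t(x,\cdot)-\mu\|_{\rm TV}\le C\rho^t$ uniformly in $x$ for some $\rho\in(0,1)$, which is exactly the paper's definition of uniform ergodicity. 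The one delicate point in this plan is extracting the QSD identity $\Pi P^D_t=e^{-\lambda_D t}\Pi$ from the bare convergence hypothesis \eqref{eq.uce1} in the first paragraph; the rest is a routine quotient estimate and a classical Dobrushin–Doeblin bootstrap.
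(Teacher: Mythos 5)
Your proposal is correct and follows the same route as the paper: rewrite $P^{D,\varphi_D}_tf$ as the ratio $P^D_t(\varphi_D f)/P^D_t\varphi_D$, divide numerator and denominator by $P^D_t1$, and invoke uniform conditional ergodicity for each piece to get uniform total-variation convergence to $\mu = a^{-1}\varphi_D\Pi$. You simply make explicit two steps the paper leaves implicit as routine—the identification of $\mu$ as invariant (which actually follows directly from the uniform convergence at all $t$ together with the semigroup property, so the QSD derivation in your first paragraph is not strictly needed) and the Dobrushin--Doeblin bootstrap from uniform TV convergence to the exponential rate in the paper's definition of uniform ergodicity.
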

%The proof of the proposition is shown in Section \ref{sec11a}.
\begin{proof}
Condition (\ref{eq.uce1}) is   equivalent with
\begin{equation}
\label{eq.uce1a}
\lim _{t\rightarrow \infty}\sup_{B\in\BB(D),x\in
  D}\Big|\frac{P^D_t(x,B)}{P^D_t(x,D)}-\Pi(B)\Big|=0.
\end{equation}
Here $P^D_t(x,B):=P^D_t1_B(x)$.
From (\ref{eq.uce1a}) we obtain
\[
\lim_{t\rightarrow \infty}\sup_{x\in D}\sup_{ 
  \|f\|_\infty\le 1}\Big|\frac{ P^D_t(\varphi_D f)(x)}{P^D_t1(x)}-\int_Df\varphi_Dd\Pi\Big|=0.
\]
Equivalently,
\[
\lim_{t\rightarrow \infty}\sup_{x\in D}\sup_ {\|f\|_\infty\le 1}\Big|\frac{P^D_t\varphi_D(x)}{P^D_t1(x)}P^{D,\varphi_D}_tf(x)-\int_Df\varphi_Dd\Pi\Big|=0.
\]
Condition (\ref{eq.uce1}) implies that $P^D_t\varphi_D/P^D_t1$
converges uniformly to $a:=\int_D\varphi_D\,d\Pi$, as $t\to+\infty$. Therefore,
\[
\lim_{t\rightarrow \infty}\sup_{x\in D}\sup_{  \|f\|_\infty\le 1}\Big|P^{D,\varphi_D}_tf(x)-a^{-1}\int_Df\varphi_Dd\Pi\Big|=0
\]
and the conclusion of the proposition follows.
\end{proof}

%\textcolor{red}{\bf DOTAD}

\subsection{$\varphi_D$-Hopf lemmas and uniform ergodicity of the resolvent}

 We close this section with an interesting    corollary
  to the results presented above. It states esentially that
  the Hopf lemma with  the bottom function  $\varphi_D$ (a positive eigenfunction)
is equivalent with the uniform  ergodicity  of the resolvent family
corresponding to the $\varphi_D$-Doob transform of the canonical process.

\begin{definition}
\label{df010211-20}
Assume  that condition  Eig) is in force.
We say that a {\em $\varphi_D$-Hopf lemma} holds  for subsolutions to \eqref{eq3.1ab}
if  for any  $u\in\mathcal U^+_c$, that  is   non-constant $m$-a.e. in $D$, there exists $a>0$ such that 
\begin{equation}
\label{0101adcg}
\bar u_{D}-u(x)\ge a  \varphi_D(x),\quad x\in D.
\end{equation}
%If   $a= \bar u_{D_S }$,  then  we say that 
%a {\em quantitative $\varphi_D$-Hopf lemma} holds.
\end{definition}

\begin{proposition}
\label{prop26.1}
Assume  that  $mR_\alpha^D\ll m$ for some $\alpha\ge 0$, and  conditions  IR$_m$) and Eig) hold. Suppose  that for any $\al\ge0$ and for
any non-trivial function $f\in B^+(D)$ there exists $c(f,\al)>0$
such that 
\begin{equation}
\label{eq25.1}
R^D_\al f\ge c(f,\al)\varphi_D,\quad\mbox{in}\quad D.
\end{equation}
Then, $M(\alpha,\varphi_D,\nu^\alpha_D)$ holds for any $\al\ge0$, with $m\ll\nu^\alpha_D$.
\end{proposition}
\begin{proof}
Since IR$_m$) and $mR^D_\alpha\ll m$ are
assumed, by Theorem   \ref{th3.hl.nuc1a}, there exist
  a measure $\nu^\al_D$, satisfying
$m\ll\nu^\al_D$, and a strictly positive, $\al+1$-excessive  function $\psi^\al_D$
such that
estimate \eqref{RDa} holds. 
   Thus, by \eqref{eq25.1} applied for $\al+2$ and  $f=\psi^\al_D$, we have 
%$M'(\al,\nu^\al_D)$.}
% In addition, for any $\al>0$ we also have 
 \begin{equation}
\label{013110-20}
 \psi^\al_D\ge  R_{\al+2}^D\psi^\al_D\ge c(\psi^\al_D,\al+2)\varphi_D.
 \end{equation}
 The conclusion of the proposition follows from an application of
 \eqref{RDa} and then \eqref{013110-20}.
\end{proof}

% Let  $c(\cdot)$ be a Borel measurable function satisfying
%  A3). 
 
 \begin{theorem}
\label{thm010211-20}
Suppose that the assumptions of Theorem \ref{eq22.th1} are in force. The  $\varphi_D$-Hopf lemma, as
 formulated in Definition \ref{df010211-20}, holds  for  any $c(\cdot)$
 satisfying the hypothesis A3) if and only if the resolvent family $(R^{D,\varphi_D}_\al)_{\al> 0}$ is uniformly ergodic. 
\end{theorem}
\begin{proof}
Assume the validity of the $\varphi_D$-Hopf lemma. Suppose that
$\al\ge0$ and 
that   $f\in
B^+_b(D)$  satisfies $\int_D fdm>0$. Then 
  $u:=-R^D_\al f$
is a weak subsolution to $-Av+\al v=0$ %\textcolor{red}{From the definition
%of the resolvent  $u(x)=-R^D_\al f(x)=0$, $x\in \partial_SD$.} 
and, as a result,  
$u\in {\cal U}_\al$. Clearly $\bar u_{D_S}\le 0$.  If $\bar u_{D_S}<0$, then obviously
\eqref{eq25.1} holds (as $\varphi_D$ is assumed to be bounded).
Suppose therefore that $\bar u_{D_S}= 0$.  It cannot be constant, as
then we would have $\int_DR^D_\al fdm=0$, which in turn, by the
assumption that $mR_\al\ll m$, would imply that  $\int_D fdm=0$. The latter
contradicts the fact that $f$ is non-trivial $m$ a.e. We conclude from \eqref{0101adcg}
that there exists $a>0$ such that
$$
R^D_\al f(x) =  R^D_\al f(x)  +\bar u_{D_S}=\bar u_{D_S}-u(x)\ge a  \varphi_D(x).
$$
Thus  \eqref{eq25.1} follows again.  From
Proposition \ref{prop26.1} and Theorem \ref{eq22.th1}  we infer therefore the
uniform ergodicity of $(R^{D,\varphi_D}_\al)_{\al> 0}$. 

Conversely, suppose that  the resolvent family  is uniformly
ergodic. Then, by Theorem \ref{eq22.th1} for any $\al\ge0$  condition $M(\al,\varphi_D,
\nu^\alpha_D)$ holds with non-trivial  $\nu^\alpha_D$ satisfying $m\ll \nu^\alpha_D$. The
$\varphi_D$-Hopf lemma is then a direct consequence of Theorem  \ref{th3.hl.nuc1}.
\end{proof}

%\textcolor{red}{\bf DOTAD}

\section{Quantitative Hopf lemma}

\label{sec3-2810-20}
Recall that $\mathcal U_c(g)$  denotes the set
of all   weak  
subsolutions to \eqref{eq3.1a} and $\mathcal U_c^+(g)$ consists of those
$u\in \mathcal U_c(g)$, for which   $\bar u_{D_S }\ge0$. Given a function
$f:D\to\bbR$ we let  $f^-=\max\{-f,0\}$.  Throughout
this section we assume that $\bar g_D\le 0$, therefore $g^-=-g$.

{A starting point of our discussion  is the following simple result.}
\begin{proposition}
\label{prop010705-20}
Suppose that   $u\in \mathcal U_c(g)$. Then, cf \eqref{wcD},
\begin{equation}
\label{010705-20}
\bar u_{D_S }-u(x)\ge \bar u_{D_S } w_{c,D}(x) ,\quad x\in D.
\end{equation}
\end{proposition}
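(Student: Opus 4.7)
The plan is to read the inequality off directly from the defining property of a weak subsolution, after sending the truncation time to infinity and discarding the (non-positive) inhomogeneity. The only non-routine ingredient is the identification of the boundary position of $X_{\tau_D}$, which is already given.

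First I would take the subsolution inequality \eqref{subsol.g} in Definition \ref{df3.1g} and pass to the limit $t\to+\infty$. Under hypothesis ET) we have $\tau_D<+\infty$ $P_x$-a.s., so $\tau_D\wedge t\to\tau_D$; since $u\in B_b(\R^d)$ and $0\le e_c(\cdot)\le 1$, bounded convergence gives
\[
u(x)\le \E_x\bigl[e_c(\tau_D)u(X_{\tau_D})\bigr]+\E_x\!\left[\int_0^{\tau_D}e_c(r)g(X_r)\,dr\right],\qquad x\in D.
\]
(The second expectation is finite because $g$ is bounded and $\E_x[\tau_D\wedge t\,]\uparrow \E_x\tau_D$; even without that, monotone convergence applied to $-g\ge 0$ handles the limit.) Since $g\le 0$ on $D$, the integral term is $\le 0$ and can be dropped, leaving
\[
u(x)\le \E_x\bigl[e_c(\tau_D)u(X_{\tau_D})\bigr],\qquad x\in D.
\]

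Next, by Lemma \ref{lm.lm.151120} we have $X_{\tau_D}\in\partial_S D\subset D_S$ $P_x$-a.s., hence $u(X_{\tau_D})\le \bar u_{D_S}$ $P_x$-a.s. Multiplying this pointwise inequality by the non-negative random variable $e_c(\tau_D)$ (which preserves the inequality regardless of the sign of $\bar u_{D_S}$) and taking $\E_x$, one obtains
\[
\E_x\bigl[e_c(\tau_D)u(X_{\tau_D})\bigr]\le \bar u_{D_S}\,\E_x e_c(\tau_D)=\bar u_{D_S}\,v_{c,D}(x).
\]

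Combining the two displays,
\[
u(x)\le \bar u_{D_S}\,v_{c,D}(x)=\bar u_{D_S}\bigl(1-w_{c,D}(x)\bigr),
\]
which rearranges immediately to \eqref{010705-20}. There is no real obstacle here; the only points requiring minor care are the justification of the $t\to\infty$ passage (routine bounded/monotone convergence using A3), boundedness of $u$, and ET)) and the observation that multiplying $u(X_{\tau_D})\le \bar u_{D_S}$ by the non-negative factor $e_c(\tau_D)$ is legitimate without any sign assumption on $\bar u_{D_S}$, which is why no analogue of the hypothesis \eqref{upos} is needed in the statement.
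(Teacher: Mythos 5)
Your proof is correct and follows essentially the same route as the paper's: apply the subsolution inequality, invoke Lemma \ref{lm.lm.151120} to bound $u(X_{\tau_D\wedge t})$ by $\bar u_{D_S}$, and pass $t\to\infty$ to recover $w_{c,D}$. The only cosmetic difference is that the paper first reduces to the case $\bar u_{D_S}\ge 0$ via a ``WLOG'' remark before taking the supremum over $t$, whereas you take $t\to\infty$ at the outset and note that multiplying by $e_c(\tau_D)\ge 0$ makes the argument sign-agnostic, a slight streamlining of the same argument.
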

Note that  $w_{c,D}$ is  entirely  determined by the operator $A$ and function
$c$. Clearly,  \eqref{010705-20} is non-trivial as long as  function
$w_{c,D}$ does not vanish.
The latter is guaranteed e.g.  by the hypothesis IR$_m$)  and A3') as can be seen from
Proposition \ref{prop011009-20} below.  We see that in case $\bar u_D>0$ and $c$ is in some sense non-trivial, then 
the Hopf lemma holds with $a=\bar u_D$ and the bottom function
$w_{c,D}$.   {A result of this kind, where the dependence of the constant $a>0$ on  $\bar u_D$ is explicit, e.g. $a=C(\bar u_D+1)$, and  the constant $C>0$   depends only  
  on the coefficients $c, g$,
or operator $A$  shall be called a  {\em quantitative Hopf lemma}}.

\subsubsection*{ Proof of Proposition \ref{prop010705-20}.}
%Proof of the proposition is presented in Section \ref{sec5.1b}. 

With no loss of generality we may and shall assume that $\bar u_{ D_S }\ge
0$. Otherwise the estimate  \eqref{010705-20} is trivial.
By the definition of a weak subsolution and Lemma \ref{lm.lm.151120}, we can write
\begin{align*}
&\bar u_{ D_S }-u(x)\ge \bar u_{ D_S }-\E_x\left[e_c(\tau_D\wedge
                t)u(X_{\tau_D\wedge t})\right]\\
&\ge 
\bar u_{ D_S }-\bar u_{ D_S }\, \E_x\left[e_c(\tau_D\wedge
                t)\right]\ge \bar u_{ D_S }\left\{1- \E_x\left[e_c(\tau_D\wedge
                t)\right]\right\}\\
&\ge  w_{c,D}(x) \bar u_{ D_S },\quad x\in D
\end{align*}
and estimate \eqref{010705-20} follows.\qed

In what follows we formulate a number of refinements of Proposition \ref{prop010705-20}.

\subsection{Quantitative Hopf lemmas}
 We start with the following.

\begin{theorem}[Quantitative $\varphi_D$-Hopf lemma]
\label{pee}
Suppose that Eig) holds.
Then, $($see \eqref{c-c}$)$ for any  $u\in\mathcal U^+_c(g)$,
\begin{equation}
\label{012804-20}
\bar u_{D_S }-u(x)\ge \frac{\varphi_D(x)}{2e
  \|\varphi_D\|_\infty }\left(\frac{\underline
  c_D \bar u_{D_S }}{\la_D+\underline c_D}+\frac{\underline{ g}^-_D}{\la_D+\bar c_D}
\right),\quad x\in D.
\end{equation}
\end{theorem}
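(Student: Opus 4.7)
The plan is to pass to the limit $t\to\infty$ in the weak subsolution inequality \eqref{subsol.g}, apply the boundary trace Lemma \ref{lm.lm.151120} together with the weak maximum principle (Proposition \ref{prop012904-20}), and estimate the two resulting terms separately via the eigenfunction $\varphi_D$. Since $u$ and $g$ are bounded and $\tau_D<\infty$ $P_x$-a.s.\ by ET), dominated/monotone convergence applied to \eqref{subsol.g} yields
\[
u(x)\le \mathbb E_x\big[e_c(\tau_D)u(X_{\tau_D})\big]+\mathbb E_x\Big[\int_0^{\tau_D}e_c(r)g(X_r)\,dr\Big].
\]
By Lemma \ref{lm.lm.151120} we have $X_{\tau_D}\in\partial_SD$ $P_x$-a.s., hence $u(X_{\tau_D})\le \bar u_{\partial_SD}=\bar u_{D_S}$ by \eqref{eq.mu.main}. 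Using also $g=-g^-$ and the definition of $w_{c,D}$, this rearranges to
\[
\bar u_{D_S}-u(x)\ge \bar u_{D_S}\,w_{c,D}(x)+\mathbb E_x\Big[\int_0^{\tau_D}e_c(r)g^-(X_r)\,dr\Big].
\]

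The $g^-$-term is straightforward. From $e_c(r)\ge e^{-\bar c_D r}$, $\mathbf{1}\ge \varphi_D/\|\varphi_D\|_\infty$, and the eigenrelation $R^D_\alpha\varphi_D(x)=\varphi_D(x)/(\alpha+\lambda_D)$ (immediate from integrating \eqref{eigen} in time), we obtain
\[
\mathbb E_x\Big[\int_0^{\tau_D}e_c(r)g^-(X_r)\,dr\Big]\ge \underline g^-_D\,R^D_{\bar c_D}\mathbf{1}(x)\ge \frac{\underline g^-_D\,\varphi_D(x)}{(\lambda_D+\bar c_D)\|\varphi_D\|_\infty}\ge \frac{\underline g^-_D\,\varphi_D(x)}{2e(\lambda_D+\bar c_D)\|\varphi_D\|_\infty}.
\]

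The delicate step is the lower bound on $w_{c,D}$ with denominator $\lambda_D+\underline c_D$ (rather than the easier $\lambda_D+\bar c_D$). Fix $t_0:=1/(\lambda_D+\underline c_D)$, assuming $\underline c_D>0$ (if $\underline c_D=0$ the first summand on the right-hand side of \eqref{012804-20} vanishes and nothing is to be proved). On the event $\{\tau_D>t_0\}$ the inequality $c\ge\underline c_D$ on $D$ gives $e_c(\tau_D)\le e^{-\underline c_D t_0}$, whence
\[
w_{c,D}(x)=\mathbb E_x[1-e_c(\tau_D)]\ge (1-e^{-\underline c_D t_0})P_x[\tau_D>t_0]\ge (1-e^{-\underline c_D t_0})e^{-\lambda_D t_0}\frac{\varphi_D(x)}{\|\varphi_D\|_\infty},
\]
using $P_x[\tau_D>t_0]=P^D_{t_0}\mathbf{1}(x)\ge P^D_{t_0}(\varphi_D/\|\varphi_D\|_\infty)(x)=e^{-\lambda_D t_0}\varphi_D(x)/\|\varphi_D\|_\infty$. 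Setting $s:=\underline c_D t_0=\underline c_D/(\lambda_D+\underline c_D)\in(0,1]$ one has $\lambda_D t_0=1-s\in[0,1)$; the elementary inequality $1-e^{-s}\ge s/2$ on $[0,1]$ (an easy monotonicity check for $h(s)=1-e^{-s}-s/2$, which satisfies $h(0)=0$ and whose derivative changes sign once in $[0,1]$ with positive endpoint value) together with $e^{-\lambda_D t_0}\ge e^{-1}$ yield
\[
(1-e^{-\underline c_D t_0})e^{-\lambda_D t_0}\ge \frac{s}{2e}=\frac{\underline c_D}{2e(\lambda_D+\underline c_D)},
\]
so $\bar u_{D_S}w_{c,D}(x)\ge \underline c_D\bar u_{D_S}\varphi_D(x)/\big(2e(\lambda_D+\underline c_D)\|\varphi_D\|_\infty\big)$. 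Adding the two estimates gives \eqref{012804-20}. The principal obstacle is exactly this balanced choice of $t_0$: one needs a truncation making the killing factor $1-e^{-\underline c_D t_0}$ comparable to $\underline c_D t_0$ while keeping the exit-time decay $e^{-\lambda_D t_0}$ bounded away from zero, and $t_0=1/(\lambda_D+\underline c_D)$ is the natural choice that places both $\underline c_D t_0$ and $\lambda_D t_0$ in $[0,1]$ with sum equal to $1$.
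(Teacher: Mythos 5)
Your proposal is correct and rests on the same core mechanism as the paper's proof: the eigenrelation \eqref{eigen} is converted into the survival estimate $P_x(\tau_D>t)\ge e^{-\lambda_D t}\varphi_D(x)/\|\varphi_D\|_\infty$, and the killing factor $1-e_c$ is bounded below on the event $\{\tau_D>t\}$ by $1-e^{-\underline c_Dt}$. Where you diverge is in how the constant $1/(2e)$ and the two summands are produced. The paper proves two \emph{separate} lower bounds on $\bar u_{D_S}-u(x)$ (estimates \eqref{072904-20} and \eqref{052904-20a}), each with a factor $1/e$ obtained by maximizing $(1-e^{-\cdot t})e^{-\lambda_D t}$ over $t>0$ and then applying $(1+s)^{1/s}\le e$; the stated inequality with $1/(2e)$ is then obtained by averaging the two separate bounds. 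You instead keep the additive structure of the starting inequality $\bar u_{D_S}-u(x)\ge \bar u_{D_S}w_{c,D}(x)+\mathbb E_x\big[\int_0^{\tau_D}e_c g^-\big]$, fix $t_0=1/(\lambda_D+\underline c_D)$ so that both $\underline c_Dt_0$ and $\lambda_Dt_0$ lie in $[0,1]$, and get the factor $1/(2e)$ directly from $1-e^{-s}\ge s/2$ on $[0,1]$ together with $e^{-\lambda_Dt_0}\ge e^{-1}$; for the $g^-$-term you bypass the time truncation entirely via the resolvent identity $R^D_{\bar c_D}\varphi_D=\varphi_D/(\lambda_D+\bar c_D)$, which in fact gives a constant sharper by a factor $2e$ than needed. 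Both routes are valid; the paper's optimize-and-average trick implicitly shows the slightly stronger pair of separate inequalities with the factor $1/e$, while your route proves the stated inequality directly and is a bit cleaner on the source term.
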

%The result is proved  in Section \ref{sec7.2.1}.\label{sec7.2.1}
\begin{proof}
By the definition of a weak subsolution, Lemma \ref{lm.lm.151120} and \eqref{eq.mu.main},
\begin{equation}
\label{052904-20b}
\bar u_{ D_S }-u(x)\ge \big(1-\E_xe_c(\tau_D\wedge t)\big) \bar u_{ D_S }-\mathbb
E_x\left[\int_0^{\tau_D\wedge t}e_c(s)g(X_s)\,ds\right],\quad t>0.
\end{equation}
Observe that (here $\underline c_D=\inf_Dc$)
\begin{equation}
\label{052904-20}
1-\E_xe_c(\tau_D\wedge t)\ge (1-e^{-{\underline c}_D t})P_x(\tau_D>t) .
\end{equation}
From \eqref{eigen} we have
\begin{equation}
\label{062904-20}
e^{-\lambda_D t}\varphi_D(x)=\E_x\left[\varphi_D(X_t),\,\tau_D>t\right]\le \|\varphi_D\|_\infty P_x(\tau_D>t).
\end{equation}
Substituting into \eqref{052904-20} the lower bound on
$P_x(\tau_D>t)$ obtained from \eqref{062904-20}
and maximizing over $t>0$ we conclude that
\begin{equation}
\label{072904-20}
\begin{split}
&\bar u_{ D_S }-u(x)\ge \frac{\bar u_{ D_S }
  \varphi_D(x)}{\|\varphi_D\|_\infty}\cdot \frac{{\underline c}_D /\la_D}{(1+{\underline c}_D/\la_D)^{1 +\la_D/{\underline c}_D}}\\
&
\ge \frac{\bar u_{ D_S }
  \varphi_D(x)}{e\|\varphi_D\|_\infty}\cdot\frac{{\underline c}_D}{\la_D+{\underline c}_D}.
\end{split}
\end{equation}
In the last inequality we use an elementary bound $(1+s)^{1/s}\le e$
valid for
$s>0$. We also have  (here, as we recall, $\bar c_D=\sup_Dc$ and ${\underline g}^-_D=\inf_Dg^-$)
\begin{equation*}
%\label{052904-20c}
\begin{split}
&-\mathbb
E_x\left[\int_0^{\tau_D\wedge t}e_c(r)g(X_r)\,dr\right]\ge\mathbb
  E_x\left[\int_0^{\tau_D\wedge t}e^{-\bar c_D r}dr\right] \underline{ g}^-_D
\\
&
\ge
  \frac{1}{\bar c_D}(1-e^{-\bar c_D t}) P_x(\tau_D>t) \underline{ g}^-_D.
\end{split}
\end{equation*}
Using again \eqref{062904-20}  to estimate $P_x(\tau_D>t)$ from below
and maximizing over $t>0$ we conclude that   
\begin{align}
\label{052904-20a}
\bar u_{ D_S }-u(x)\ge \frac{
  \varphi_D(x)}{\|\varphi_D\|_\infty \bar c_D}\cdot\frac{\underline{ g}^-_D\bar
  c_D /\la_D}{(1+\bar c_D/\la_D)^{1 +\la_D/\bar c_D}} 
\ge \frac{
  \varphi_D(x)}{e\|\varphi_D\|_\infty }\cdot\frac{\underline{ g}^-_D}{\la_D+\bar
  c_D}.
\end{align}
Estimate \eqref{012804-20} follows easily from \eqref{072904-20} and \eqref{052904-20a}.
\end{proof}

\begin{theorem}
\label{thm010807-20}
Assume  the minorization condition $M(\bar c_D, \psi_D^{\bar c_D},
\nu^{\bar c_D}_D)$ (see Definition \ref{minorization}) for some non-negative function $\psi_D^{\bar
  c_D}$ and Borel measure $\nu^{\bar c_D}_D$.   Suppose
furthermore that   $u\in\mathcal U^+_c(g)$.
Then,
\begin{equation}
\label{022804-20ax}
\bar u_{D_S }-u(x)\ge \psi^{\bar c_D}_D(x) \left\{\bar u_{D_S }\int_Dc
v_{c,D}d\nu^{\bar c_D}_D +\int_{D}g^-\,d\nu^{\bar c_D}_D\right\},\quad x\in D.
\end{equation}
\end{theorem}
\begin{proof}
We start with the estimate  \eqref{052904-20b}. Letting $t\to+\infty$
we conclude, upon an application the Fatou lemma, that
\begin{equation*}
%\label{052904-20bb}
\bar u_{ D_S }-u(x)\ge w_{c,D}(x) \bar u_{ D_S }-\mathbb
E_x\left[\int_0^{\tau_D}e_c(s)g(X_s)\,ds\right],\quad t>0.
\end{equation*}
To estimate the first
term in the right hand side we use   (\ref{eq.wcd1}) and  $M(\bar c_D, \psi_D^{\bar c_D},
\nu^{\bar c_D}_D)$. Then, for any $x\in D$, 
\begin{equation*}
%\label{011607-20}
w_{c,D}(x) \bar u_{ D_S }
=\E_x\left[\int_0^{\tau_D}v_{c,D}(X_t)c(X_t)dt\right]\bar u_{ D_S }
\ge \bar u_{ D_S } \psi_D^{\bar c_D}(x)\int_{D}cv_{c,D}d\nu^{\bar c_D}_D
\end{equation*}
and
\begin{align*}
%\label{102904-20}
\mathbb
-\mathbb{E}_x\left[\int_0^{\tau_D}e_c(s)g(X_s)\,ds\right]\ge-\mathbb
  E_x\left[\int_0^{\tau_D}e^{-\bar c_D s}g(X_s) ds\right]
\ge
-  \psi_D^{\bar c_D}(x) \int_{D}g\,d\nu^{\bar c_D}_D.
\end{align*}
Estimate \eqref{022804-20ax} thus follows.
\end{proof}

 {Theorem \ref{thm010807-20} combined with   the
  results implying the minorization condition, obtained in the foregoing, 
allow us to formulate various  quantitative Hopf lemmas. We formulate two such results, which may be of special interests 
in the theory of P.D.E-s}

\begin{theorem}
\label{ultra}
1) Assume that  IR$_m$) holds and $mR_\alpha^D\ll m$ for some $\alpha\ge 0$.  
Then, there exist a
function $\psi_{D,A}:D\to(0,+\infty)$ and a Borel measure $\nu_{D,A}$ on
$D$ such that 
$m\ll \nu_{D,A}$ and any $u\in\mathcal U^+_c(g)$ satisfies
\begin{equation}
\label{022804-20}
\bar u_{D_S }-u(x)\ge  \psi_{D,A}(x)\left\{ \bar u_{D_S }\int_Dc
v_{c,D}d\nu_{D,A}+ \int_{D}g^-\,d\nu_{D,A}\right\},\quad x\in D.
\end{equation}
Moreover, if coefficient $c(\cdot)$ satisfies A3'), then $\int_Dc
v_{c,D}d\nu_{D,A}>0$.

2) Assume that $(P_t)$ is intrinsically ultracontractive. Then there exists $a>0$ such that
\begin{equation}
\label{022804-20apxp}
\bar u_{D_S }-u(x)\ge  a\varphi_D(x)\left\{ \bar u_{D_S }\int_Dc\hat\varphi_D
v_{c,D}dm+ \int_{D}\hat\varphi_Dg^-\,dm\right\},\quad x\in D
\end{equation}
for any $u\in \mathcal U_c(g)$. Moreover, if coefficient $c(\cdot)$ satisfies A3'), then $\int_Dc
v_{c,D}dm>0$.
\end{theorem}
\begin{proof}
Estimate \eqref{022804-20} follows from Theorems \ref{thm010807-20}
and    \ref{th3.hl.nuc1a}, while
\eqref{022804-20apxp} follows from  Theorems \ref{thm010807-20} and   \ref{prop5.1}.
From condition ET)  we conclude that $v_{c,D}(x)>0$ for each $x\in
D$. Therefore, thanks to assumption  A3'), we obtain that  $\int_D c v_{c,D}\,dm >0$.
From this and the fact that $m\ll \nu_{D,A}$, we further conclude that $\int_Dc v_{c,D}d\nu_{D,A}>0$.
\end{proof}

\begin{corollary}
\label{cor010907-20apx}
Assume that $(P_t)$ is intrinsically ultracontractive.  Suppose that   \eqref{sal.sal1} holds and  $c$ satisfies A3'). Then, there exists a constant $a>0$
such that for any   $u(\cdot)$ - non-constant $\ell_d$-a.e.  weak
subsolution to (\ref{eq3.1a}) - and $\hat x\in   \partial D$  with    $u(\hat
x)=\bar u_{D_S }\ge0$  we have 
\begin{equation*}
%\label{012804-20zapx45}
u(\hat x)-u(x)\ge a \phi(\delta_D(x))\left(u(\hat x)+\int_D\hat\varphi_D(y)g^-(y)\,dy\right). 
\end{equation*}
\end{corollary}

\begin{remark}
Theorem \ref{ultra} is a far reaching generalization of the Morel-Oswald 
formulation of the Hopf-Lemma (see \cite{BC}) as can be seen from Corollary \ref{cor010907-20apx}.
The result of ibid. states that: if  $D$ is a smooth bounded domain, then
there exists $c>0$ such that for any $u\in W^{2,2}(D)\cap W^{1,2}_0(D)$ satisfying
\[
-\Delta u=f\quad\mbox{in}\,\, D,
\]
with $f\in L^\infty(D)$, such that $f
\ge0$  we have
\[
u(x)\ge c \delta_D(x)\int_D\delta_D(y)f(y)\,dy,\quad x\in D.
\]
\end{remark}

%\subsubsection{Non-negative eigenfunction and eigenvalue}
%Using the transition semigroup we can define  a non-negative
%eigenvalue and  eigenfunction for the operator   $A$. 

%Below we formulate a number of results concerning the
%lower bounds on $\bar u_{D_S }- u(x)$.  Their common
%denominator    is  the fact that the constant
%$a$ appearing in either estimate \eqref{010104-20av1}, or \eqref{010104-20av1a}
%equals $ \bar u_{D_S }$.  Estimates of this type shall be
% referred to as {\em quantitative Hopf lemmas}.

%\begin{definition}[Quantitative Hopf lemma]
%\label{dfQHL}

%We say that
%that  a quantitative Hopf lemma holds for subsolutions of (\ref{eq3.1a}), if there
%exists  a  function $\psi:D\to(0,+\infty)$ such that  for any $u\in
%\mathcal U_c(g)$ we have
%\begin{equation}
%\label{qHL-1}
%\bar u_{D_S }-u(x)\ge \bar u_{D_S }\psi(x),\quad x\in D
%\end{equation}
%\end{definition} 
%Note that, for obvious reasons,  positivity of  $ \bar u_{D_S }$
 % need not be   assumed in \eqref{qHL-1}. 

 {
\subsection{Properties  of $w_{c,D}$}
\label{sub.sub12}

Here  we take a closer look at the function $w_{c,D}$
 appearing in Proposition \ref{prop010705-20}. 
Recall that for its definition, cf \eqref{wcD}, we require only conditions $A1) -A4)$. 
Obviously $0\le w_{c,D}(x)\le 1,\, x\in D$. Since $w_{c,D}$ is bounded, a simple calculation shows that
\begin{equation}
\label{eq.wcd1}
w_{c,D}(x)=R^D(c v_{c,D})(x)=R^D(c-c w_{c,D} )(x),\quad x\in D.
\end{equation}
Indeed, let $C_t:=\int_0^tc(X_r)\,dr$. Clearly, $v_{c,D}(x)=\mathbb E_xe^{-C_{\tau_D}},\, x\in\BR^d$.
Then,
\begin{equation}
\label{eq.comb9240}
1-v_{c,D}(x)=1-\mathbb E_xe^{-C_{\tau_D}}=\mathbb E_x\left[\int_0^{\tau_D}e^{-C_t}\,dC_t\right]=\mathbb E_x\left[\int_0^{\tau_D}c(X_t)e^{-C_t}\,dt\right].
\end{equation}
On the other hand, by the Markov property,
\[
\mathbf{1}_{\{t<\tau_D\}}v_{c,D}(X_t)=\mathbf{1}_{\{t<\tau_D\}}\mathbb E_x\Big[e^{-\int_t^{\tau_D}c(X_r)\,dr}\big|\FF_t\Big].
\]
Thus,
\begin{align*}
R^D(cv_{c,D})(x)&=\mathbb E_x\left[\int_0^{\tau_D}c(X_t)v_{c,D}(X_t)\,dt\right]=\mathbb E_x\left[\int_0^{\tau_D}c(X_t)e^{-\int_t^{\tau_D}c(X_r)\,dr}\,dt\right]
\\&
=\mathbb E_x\left[\int_0^{\tau_D}c(X_t)e^{-\int_0^tc(X_r)\,dr}\,dt\right].
\end{align*}
This combined with \eqref{eq.comb9240} yields \eqref{eq.wcd1}.

From the definition of $w_{c,D}$ we conclude that
\[
w_{c,D}(x)>0\,\quad\mbox{iff}\quad \,P_x\Big(\int_0^{\tau_D}c(X_r)\,dr>0\Big)=1.
\]

Thanks to \eqref{eq.wcd1} we conclude the following.
\begin{proposition}
\label{prop011009-20} Suppose that
$c$ satisfies assumption A3'). Then,
condition IR$_m$) implies that $w_{c,D}(x)>0$ for all $ x\in D$.
\end{proposition}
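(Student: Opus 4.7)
The plan is to use identity \eqref{eq.wcd1}, namely $w_{c,D}(x)=R^D(cv_{c,D})(x)$, and reduce the claim to showing that $R^D$ applied to the non-negative Borel function $cv_{c,D}$ is strictly positive on $D$. This is exactly the kind of conclusion that the irreducibility hypothesis IR$_m$) is designed to produce, provided we can verify the integrability condition $\int_D cv_{c,D}\,dm>0$.

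First I would check that $v_{c,D}(x)>0$ for every $x\in D$. This uses hypothesis ET), which is standing throughout Section~3 onward: since $c$ is bounded, $v_{c,D}(x)\ge \mathbb E_x[e^{-\bar c_D\tau_D}]$, and $P_x[\tau_D<+\infty]=1$ forces this expectation to be strictly positive. Next, assumption A3') gives $\int_D c\,dm>0$, so the set $\{c>0\}$ has positive $m$-measure; combined with $v_{c,D}>0$ pointwise on $D$, the integrand $cv_{c,D}$ is strictly positive on a set of positive $m$-measure, whence
\begin{equation*}
\int_D cv_{c,D}\,dm>0.
\end{equation*}

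The function $cv_{c,D}$ is non-negative, bounded (by $\|c\|_\infty$) and Borel, so it belongs to $B_b^+(D)$ and satisfies the hypothesis of IR$_m$). Applying IR$_m$) with $f=cv_{c,D}$ yields $R^D_1(cv_{c,D})(x)>0$ for all $x\in D$. Since $R^D_\alpha f$ is monotone decreasing in $\alpha$ for $f\ge 0$, we have $R^D(cv_{c,D})(x)=R^D_0(cv_{c,D})(x)\ge R^D_1(cv_{c,D})(x)>0$ (alternatively, one may invoke the observation recorded just after \eqref{RD1} that IR$_m$) implies $R^D_\alpha f>0$ for all $\alpha\ge 0$). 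Combining this with \eqref{eq.wcd1} gives $w_{c,D}(x)>0$ for every $x\in D$, as required.

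There is no significant obstacle here: the identity \eqref{eq.wcd1} has already been established and does the bulk of the work. The only minor point worth stating carefully is the positivity of $\int_D cv_{c,D}\,dm$, which one obtains by truncating: the sets $\{cv_{c,D}>1/n\}$ exhaust $\{c>0\}$, so at least one of them has positive $m$-measure, and hence contributes a positive amount to the integral.
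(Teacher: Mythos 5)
Your proof is correct and follows the same route the paper implicitly intends (the paper introduces the proposition with ``Thanks to \eqref{eq.wcd1} we conclude the following''): express $w_{c,D}=R^D(cv_{c,D})$, verify $\int_D cv_{c,D}\,dm>0$ from A3') plus the pointwise positivity of $v_{c,D}$ under ET), and invoke IR$_m$) together with the remark after \eqref{RD1} to get $R^D(cv_{c,D})>0$ on $D$. This is also the same verification the authors perform explicitly in the proof of Corollary \ref{cor010411-20}, so there is nothing to add.
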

\begin{remark}
In particular, the assumption of uniform ellipticity of the local part
of $A$ implies condition IR$_m$), see \cite[Section
7.7.1]{kk01}.   Then, $w_{c,D}$ is
  strictly positive. It turns out that it is also  continuous on $D$,
see Lemma 7.6 of ibid.
\end{remark}

}

%\textcolor{red}{\bf DOTAD}

\appendix

 {

\section{Supplement: Some additional comments on the hypotheses and applications of
  the results}}

\label{sec6zz}

\subsection{Remarks on the existence of  a strong Markovian  solution of the martingale problem}
\label{sec.a4}

Fairly easy to verify conditions on the coefficients of the operator $A$
implying A4) 
appear in the literature, see e.g.  \cite{Stroock,LM},
Chapter 4 of \cite{ethier-kurtz}, \cite{kuhn,kuhn2}, \cite{AP,Hoh,JS}.
Below, we review some of the existing results.  First note that  without any loss of
generality we may and shall assume that
\begin{equation}
\label{eq3.mc1}
N(x,B^c(0,1))=0,\quad x\in\BR^d.
\end{equation}
Indeed, let   $(P_x)_{x\in\bbR^d}$ be a strong Markov process solving
the martingale problem for an operator $A_0$ given by  \eqref{A}, with $N$
replaced by
\[
\tilde N(x,dy)=\mathbf{1}_{B(0,1)}(y)N(x,dy).
\]
By \cite[Proposition 10.2, Section 4]{ethier-kurtz} there exists then a
strong Markov solution to the
problem associated with  $A=A_0+A_1$, where
\[
A_1f(x):=\int_{B^c(0,1)}(f(x+y)-f(x))\,N(x,dy),\quad x\in\bbR^d.
\] 
%Clearly, $A=\tilde A+K$. 
From condition (\ref{eq3.mc1}) and assumption \eqref{MA} we infer 
% \[
% \|b\|_\infty+\|Q\|_\infty+\|\int_{\BR^d\setminus\{0\}}\min\{|y|^2,1\}\, N(\cdot,dy)\|_\infty<\infty,
% \]
\begin{equation}
\label{pxxi}
\lim_{R\rightarrow \infty}\sup_{|x|\le R}\sup_{|\xi|\le 1/R}|p(x,\xi)|=0,
\end{equation}
where $p(x,\xi)$ is the Fourier symbol associated with the operator
$A$, defined as
\begin{equation}
\label{eq3.ex3td}
p(x,\xi):=\frac{1}{2}\sum_{k,\ell=1}^dq_{k,\ell}(x)\xi_k\xi_\ell-i
\sum_{k=1}^db_{k}(x)\xi_k+\int_{\BR^d}\left(1-e^{i\xi\cdot
    y}+\frac{i y\cdot \xi}{1+|y|^2}\right)N(x,dy)
\end{equation}
 for any $x,\xi\in\bbR^d$.
Obviously ${\rm Re}\, p(x,\xi)\ge 0$.

By \cite[Theorem 4.1]{kuhn} condition \eqref{pxxi} implies the existence of a strong
Markovian solution associated with the operator $A$, provided that we
can prove that
there exists a solution to the martingale problem for $A$
and an arbitrary initial Borel probability distribution $\mu$. %any probability measure $\mu$ on $\BR^d$

To state the result concerning the existence of
 a strong Markovian  solution   we formulate  some additional hypotheses.

\begin{itemize}
\item[SE)] 
 In addition to the assumptions made in A1)  suppose that
  the matrix 
${\bf
  Q}(x)$  is {\em uniformly positive
definite} on compact sets, i.e. for any compact set $K\subset \R^d$
there exists $\la_K>0$ such that 
 \begin{equation}
\label{la-K}
\la_K|\xi|^2\le \sum_{i,j=1}^dq_{i,j}(x)\xi_i\xi_j,\quad x\in K,\, \xi=(\xi_1,\ldots,\xi_d)\in\BR^d.
\end{equation}

\item[C)] the mapping $\BR^d\ni x\mapsto {\bf
    Q}(x)\in\bbR^{d\times d}$ is continuous together with
\[
\BR^d\ni x\mapsto N_B(x):=\int_{B}\min\{|y|^2, 1\} \,N(x,dy)
\]
for any Borel  $B\subset B(0,1)$.
\end{itemize}
The following summarizes a few of sufficient conditions for the
validity of A4). 
\begin{theorem}
\label{thm2.3}
Hypothesis A4) is satisfied, provided that   one of the following
conditions are fulfilled:
\begin{itemize}
\item[a)]  \eqref{MA} and \eqref{la-K} are in force, or
\item[b)]  \eqref{MA}, C) hold and ${\bf Q}(x)$ is invertible for every $x\in\BR^d$, or
\item[c)] \eqref{MA}, C) are satisfied and the mapping $x\mapsto b(x)$ is continuous.
\end{itemize}
\end{theorem}
\proof
The fact that a) implies A4) follows from \cite{AP}, see also
\cite{LM}. The result concerning condition b) is a  consequence of
\cite[Theorem III.2.34, p. 159]{JS}, see also \cite{Stroock}. The
implication for
c) follows from
\cite{Hoh}, see also \cite[Theorem 3.24]{BSW}, respectively.
\qed

\subsection{Finiteness of the exit time}

\label{rm3.14} 

Clearly, for the validity of the weak maximum principle (WMP), see
\eqref{032804-20}, we need,  
some additional condition besides  A1) - A4),  to exclude  at  least  the case  $A=0$, in
which the principle obviously fails. 
Recall that in the case of the Laplace operator we have the so called infinite propagation speed of disturbances.
Heuristically,  the necessary condition for the validity of the WMP is the communication (via
the canonical process $(X_t)_{t\ge0}$) between the interior and the exterior of the domain $D$, or in other words propagation 
of the disturbance in the entire $\BR^d$. Observe that if $P_x(\tau_D=+\infty)=1$
for all $ x\in D$ (no communication), then $u=1_{D}$ is a weak subsolution 
of  $A v=0$ (here $c\equiv 0$). Obviously, (\ref{032804-20}) does not hold for $u$. 
So, condition ET), used in Proposition \ref{prop012904-20},  seems  to
be fairly close to optimal for the
validity of
the weak maximum principle. 
Many sufficient conditions can be found in the
literature implying  ET).
Below we review a few of them.

As we have already mentioned in Remark \ref{rmk010211-20} the uniform
ellipticity   on compact sets, see \eqref{la-K},
suffices   for the validity of ET). It is also possible to formulate
sufficient conditions   without assuming  SE).  
 % To formulate it
% let us denote
% \begin{equation}
% \label{eq3.ex3td}
% p(x,\xi):=\frac{1}{2}\sum_{k,\ell=1}^dq_{k,\ell}(x)\xi_k\xi_\ell-i
% \sum_{k=1}^db_{k}(x)\xi_k+\int_{\BR^d}\left(1-e^{i\xi\cdot
%     y}+\frac{i y\cdot \xi}{1+|y|^2}\right)N(x,dy)
% \end{equation}
%  for any $x,\xi\in\bbR^d$.
% Obviously ${\rm Re}\, p(x,\xi)\ge 0$.
Using \eqref{eq3.ex3td} the operator $A$   can be  written as
$$
Au(x)=-\frac{1}{(2\pi)^d}\int_{\bbR^d}e^{ix\cdot \xi}p(x,\xi)\hat u(\xi)d\xi
$$
for any $u$ belonging to $ C^\infty_c(\bbR^d)$ - the set of  $C^\infty$ smooth and compactly
supported functions. 
\begin{enumerate}
\item[(i)]
By \cite[p. 3275]{schilling}
there exist constants $C,c>0$, depending only on $d$,  such that
\begin{equation*}
%\label{schil}
\E_x\left[\int_0^{\infty}1_{B(0,r)}(X_t)dt\right]\le Cr^d \int_{[|\xi|\le c/r]}\frac{d\xi}{\inf_{z\in\bbR^d}{\rm Re}\,p(z,\xi)},\quad x\in D.
\end{equation*}
for all $r>0$. This in particular shows that 
ET)
holds, provided that 
 \begin{equation*}
%\label{022505-20}
\int_{[|\xi|\le r]}\frac{d\xi}{\inf_{z\in\bbR^d}{\rm Re}\,p(z,\xi)}<+\infty\quad\mbox{for some $r>0$.}
\end{equation*}

\item[(ii)] Let $r={\rm diam}\, D$. Suppose that for every $x\in {\rm cl}\,D$ there exists $k(x)\ge 1$ such that
\begin{equation}
\label{eq3.ex1td}
2r|\xi||{\rm Im}\, p(z,\xi)| \le {\rm Re}\, p(z,\xi)\quad\mbox{for } |\xi|\le \frac{1}{k(x)r},\, z\in B(x,r).
\end{equation}
Then, by \cite[Theorem 5.5]{BSW}, there exists $c>0$ such that
\begin{equation}
\label{eq3.ex2td}
P_x(\tau_D>t)\le \frac{c}{t}\left(\sup_{|\xi|\le 1/(rk(x))}\inf_{z\in B(x,r)} {\rm Re}\, p(z,\xi)\right)^{-1}.
\end{equation}
So, if
$$
\sup_{|\xi|\le 1/(rk(x))}\inf_{z\in B(x,r)} {\rm Re}\,
p(z,\xi)<+\infty,\quad x\in\bbR^d,
$$
 then letting $t\rightarrow \infty$, we conclude ET), in
 fact $\E_x\tau_D^{\rho}<+\infty$ for any $\rho\in(0,1)$.
For example consider the operators
\[
A_1u(x)=\frac12\sum_{j,\ell=1}^D q_{j,\ell}(x)u_{x_jx_\ell}(x),\quad A_2 u= \Delta^{s (x)} u(x),
\]
\[
A_3u=-\sum_{j=1}^d|\partial_{x_j}^2|^{s_j}u(x),\quad u\in
W^{2,p}(D)\cap C_b(\bbR^d),
\]
with $p>d$,  $s:\BR^d\rightarrow (0,1)$, and $s_1,\dots,s_d\in (0,1)$ fixed. Symbol $p_1$ for $A_1$ is given by (\ref{eq3.ex3td}) with $b=0, N\equiv 0$, and
symbols $p_2$, $p_3$, for $A_2$ and $A_3$ respectively, are given by 
\[
p_2(x,\xi)=|\xi|^{2s (x)},\quad p_3(\xi)=\sum_{j=1}^d|\xi_j|^{2s_j}.
\]
In all the  cases listed above the imaginary part of the symbols vanishes, so (\ref{eq3.ex1td}) trivially holds.
Now, we see that  by (\ref{eq3.ex2td}), ET) holds for $p_2$ and $p_3$,  and if for some $j,\ell\in\{1,\dots,d\}$, $q_{j,\ell}$ is strictly positive on compacts, then
 ET) holds for $p_1$ as well.
\item[(iii)]  Let $r={\rm diam}\, D$. Then, by \cite[Proposition
  3.7]{SW1}  ET) holds, provided that for every $x\in D$ we have
\[
\inf_{z\in B(x,r)}\int_{|y|\ge 3r}N(z,dy)>0.
\]
\end{enumerate}

\subsection{Existence of the principal eigenvalue and eigenfunction}
\label{sub.sub.uewo4.2}
{
Frequently, in applications we have  extra information about the
structure of semigroup $(P^D_t)_{t\ge 0}$ in particular consider the
case when
\[
P^D_tf(x)=\int_Dp_D(t,x,y)f(y)\,m(dy),\quad f\in B^+_b(D),\,t>0
\]
for some $p_D:(0,\infty)\times D\times D\to(0,\infty)$ and $m$ a
finite Borel measure on $D$. In this case 
some assumption about integrability of $p_D$, sufficing for compactness of $(P^D_t)_{t\ge 0}$,  implies Eig).
For example if
\[
\int_D\int_D p_D^2(t,x,y)\,m(dx)m( dy)<\infty,\quad t>0,
\]
then $P^D_t$ is a Hilbert-Schmidt operator on $L^2(D;m)$  for each $t>0$, hence it is compact.
By Jentzsch's theorem (see \cite[Theorem V.6.6, p. 337]{schaefer}) condition Eig) holds. 
If we know that for some  $1<p<q$ the hypercontractivity condition
holds, i.e. for any $t>0$ there exists $c_t>0$
such that
\[
\|P^D_t f\|_{L^q(D;m)}\le c_t\|f\|_{L^p(D;m)},\quad f\in L^p(D;m),
\]
then $P^D_t$ is compact on $L^p(D;m)$  for each $t>0$
and, again  by Jentzsch's theorem, Eig) holds. Moreover $\la_D>0$ is a
unique simple eigenvalue of $(P_t^D)_{t\ge0}$. The corresponding
eigenfuction $\varphi_D$ can be chosen to be strictly
positive. Normalizing it by letting $\int_D \varphi_Ddm=1$ we can uniquely
determine its choice.   The proof of
compactness is analogous to the argument presented  in \cite[Section 7.1.1]{kk01}.}

{
In some cases however the aforementioned properties of  the semigroup are not
so easy to come by.
Then one can try to apply the Jentzsch theorem having sufficient information about 
the resolvent operator $R^D_\alpha$.}
In this way one can e.g. conclude Eig) from  hypotheses A2) and SE),
see \cite[Theorem 5.1]{kk01}.

\bigskip

\subsection{Viscosity subsolutions}

\label{sec6.5}

Suppose   that the transition probability semigroup $(P_t)_{t\ge0}$  associated
with the martingale problem is strongly Feller,
i.e. $P_t(B_b(\BR^d))\subset C_b(\BR^d)$, $t>0$.
Assume that $D$ is Dirichlet regular, i.e. $P_x(\tau_D>0)=0,\, x\in\partial D$.
Furthermore, suppose that $c,g, q_{i,j},b_i\in C_b(\BR^d)$,
$i,j=1,\ldots,d$, the Levy kernel $N$ satisfies condition C) (see Section \ref{sec.a4}) and
the family ${\cal O}_R$ of open Dirichlet regular subsets of $\BR^d$ forms a base
for the Euclidean metric.

Let $u\in C_b(\bbR^d)$ be a viscosity subsolution  to
  \eqref{eq3.1a}.  {Using a suitable comparison principle for viscosity
subsolutions one could prove that, under the assumptions made in
    the foregoing, $u$  is  also a weak subsolution to
  \eqref{eq3.1a}.
This can be seen as follows.} With no loss of generality we may assume
that   
\begin{equation}
\label{011601-21}
u(x)\ge 0, \quad x\in D^c:=\BR^d\setminus D.
\end{equation}
  Indeed, otherwise
we would consider $\tilde u(x):=u(x)+h(x)$, where 
$h(x):=\bar u_{\bbR^d}\mathbb E_x\left[e_c(\tau_D) \right]$. The latter
  is a weak solution to \eqref{eq3.1ab} satisfying $h(x)=\bar
  u_{\bbR^d}$, $x\in D^c$, due the fact that $D\in {\cal O}_R$. 
By virtue of e.g. \cite[Theorem 2.2]{Pardoux} it is also a viscosity
solution. 
  Therefore $\tilde u(x)$ is  a viscosity subsolution  to
  \eqref{eq3.1a} that satisfies \eqref{011601-21}. The fact that $u$
  is also a weak subsolution would follow, if we can prove that $\tilde
  u$ has this property.

Let $V\subset D$ and $V\in\mathcal O_R$. By the assumptions made in the foregoing  
\begin{equation}
\label{011201-21}
w_V(x):=\mathbb E_x\left[e_c(\tau_V)u(X_{\tau_V})\right]+\mathbb E_x\left[ \int_0^{\tau_V}e_c(r)g(X_r)\,dr\right],\quad x\in \BR^d
\end{equation}
is continuous on $\BR^d$.
Thanks to the aforementioned result of  \cite{Pardoux}  it
is a viscosity solution to \eqref{eq3.1a} with $D$ replaced by $V$.
Since $V$ is regular, it satisfies the exterior  condition
$w(y)=u(y),\, y\in V^c$. 
Under appropriate assumptions, see e.g. \cite[Theorem 1.2]{BCI}, we can use a comparison principle for viscosity
solutions to the exterior Dirichlet problem for  \eqref{eq3.1a} with $D$ replaced by $V$.
It allows us to  conclude    that
\begin{equation}
\label{eq.str019}
u(x)\le w_V(x),\quad   x\in \BR^d.
\end{equation}
The above inequality holds for arbitrary $V\subset D$, with $V\in\mathcal O_R$.
To conclude that $u$ is a weak subsolution,   cf \eqref{subsol.g.eq}, we
need to  replace the family of stopping times  $(\tau_V)_{V\in\mathcal O_R}$  by the family $(\tau_D\wedge t)_{t\ge 0}$.
%We shall prove that this is indeed the case  for any $D$  Dirichlet regular.
Let 
$$
v:=u-R^{c,D}g.
$$
 From \eqref{011201-21} and \eqref{eq.str019}
we can write
\begin{equation}
\label{011201-21z}
u(x)-R^{c,V}g(x) \le w_V(x)-R^{c,V}g(x)=\mathbb E_x^c u(X_{\tau_V}),\quad x\in \BR^d.
\end{equation}
Since 
$$
R^{c,D}g(x)=R^{c,V}g(x)+ \bbE_x^cR^{c,D}g(X_{\tau_V}),
$$
we conclude from \eqref{011201-21z} that
\begin{equation}
\label{011702-21}
\mathbb E^{c}_x   v(X_{\tau_V})\ge v(x),\quad x\in V.
\end{equation}
Let $\tilde v:=\overline{ v^+}_D-v$.
Obviously $\tilde v\ge 0$  in $D$. From \eqref{011702-21} we have
$$
  \tilde v(x)=\overline{ v^+}_D-v(x)\ge \mathbb E^{c}_x \tilde v(X_{\tau_V})\ge \mathbb E^{c,D}_x \tilde v(X_{\tau_V})
  , \quad x\in V.
$$
Since  the  sets $V$ belonging to ${\cal O}_R$ and contained
in $D$ form a base for the Euclidean metric in $D$, it follows from \cite[Theorem II.5.1]{bg} 
that $\tilde v$ is an excessive function with respect to
$(P^{c,D}_t)$. Thus, $\big(\tilde v(X_{t\wedge \tau_D})\big)_{t\ge0}$ is a
supermartingale  with respect to $P^{c,D}_x$. The above implies in turn that $\big( v(X_{t\wedge \tau_D})\big)_{t\ge0}$ is a
submartingale  with respect to this measure.
 %  By \eqref{eq.str019} we have
%  $u(X_{\tau_D-})\le u(X_{\tau_D}),\, P^c_x$-a.s. 
%   \textcolor{red}{Consequently, we conclude that $Y_t:=
% u(X_{t\wedge \tau_D})$, $t\ge0$
% is a submartingale with respect to $P^c_x$???}
Since $v(X_{\tau_D})=u(X_{\tau_D})$ is non-negative, by the optional sampling
theorem, we have
\begin{equation*}
\begin{split}
&v(x)\le \mathbb E_x^c\left[v(X_{t}),\,t<\tau_D\right]\le \mathbb E_x^c\left[v(X_{t}),\,t<\tau_D\right]+ \mathbb E_x^c\left[v(X_{\tau_D}),\,t\ge\tau_D\right]\\
&
=\mathbb E_x^c v(X_{t\wedge\tau_D}), \quad x\in D
\end{split}
\end{equation*}
and \eqref{subsol.g.eq} follows.
%\qed
% $\mathbb E^c_xY_0\le \mathbb E^c_xY_{  t}$ and
% this implies 

\subsection*{Acknowledgements}
{\small  T. Klimsiak is supported by Polish National Science Centre:
  Grant No. 2017/25/B/ST1/00878.  Both T. Klimsiak and T. Komorowski  acknowledge the
support of the  Polish  National Science Centre:
Grant No.  2020/37/B/ST1/00426.}

\end{document}